\numberwithin{equation}{section}
\newtheorem{theorem}{Theorem}[section] 
\newtheorem{proposition}[theorem]{Proposition}
\newtheorem{lemma}[theorem]{Lemma} 
\theoremstyle{definition}
\newtheorem{definition}[theorem]{Definition}
\newcommand{\credit}[1]{\smallskip\noindent {\textbf{#1.\ }}}
\def\ZZ{\mathbb{Z}}
\begin{document}

%\title[Mutation Classes Associated With Generalized Cartan Matrices]{Mutation Classes Associated With The Generalized Cartan Matrices of Size $3$}
\title{Mutation Classes of Skew-symmetrizable $3\times 3$ Matrices}

\author{Ahmet I. Seven}

\address{Middle East Technical University, Department of Mathematics, 06800, Ankara, Turkey}
\email{aseven@metu.edu.tr}

\thanks{The author's research was supported in part by the Turkish Research Council (TUBITAK)}

%\thanks{The author's research was supported in part by the Scientific and Technological Research Council of Turkey (TUBITAK) grant \# 110T207}

%\date{August 26, 2011}
\date{\today}

\begin{abstract}
Mutation of skew-symmetrizable matrices is a fundamental operation that first arised in Fomin-Zelevinsky's theory of cluster algebras; it also appears naturally in many different areas of mathematics. %???. Mutation can also be naturally viewed as an operation on certain graphs, called diagrams. 
%Mutation operation defines an equivalence relation on skew-symmetrizable matrices and their graphs, which is called mutation-equivalence. 
In this paper, we study mutation classes of skew-symmetrizable $3\times 3$ matrices and associated graphs. We determine representatives for these classes using a natural minimality condition, generalizing and strengthening results of Beineke-Brustle-Hille and Felikson-Shapiro-Tumarkin. Furthermore, we obtain a new numerical invariant for the mutation operation on skew-symmetrizable matrices of arbitrary size.

%In particular, we review the correspondence between mutation classes of skew-symmetrizable matrices and generalized Cartan matrices in the finite and affine type. 
%Furthermore, we give a description of the mutation classes associated with the generalized Cartan matrices of size $3$, generalizing results of Beineke-Bruestle-Hille. %In particular, we give an interpretation of the Markov constant in terms of generalized Cartan matrices.  
\end{abstract}

%\subjclass[2000]{Primary:
%05E15,  % Combinatorial problems concerning the classical groups
%Secondary:
%05C50, % Graphs and matrices
%15A36, % Matrices of integers,
%17B67. % Kac-Moody (super)algebras (structure and representation theory)
%}
\maketitle
%\tableofcontents

\section{Introduction}
\label{sec:intro}

Mutation of skew-symmetrizable matrices is a fundamental operation that first arised in Fomin-Zelevinsky's theory of cluster algebras; it also appears naturally in many different areas of mathematics. Mutation can also be naturally viewed as an operation on certain graphs, called diagrams. 
%Mutation operation defines an equivalence relation on skew-symmetrizable matrices and their graphs, which is called mutation-equivalence. 
In this paper, we study mutation classes of skew-symmetrizable $3\times 3$ matrices and their diagrams. We determine representatives for these classes using a natural minimality condition, generalizing and strengthening results of \cite{BBH,FSTu}. Furthermore, we obtain a new numerical invariant for the mutation operation on skew-symmetrizable matrices of arbitrary size.

%One of the recent developments in representation theory has been the introduction of cluster algebras by Fomin and Zelevinsky to provide an algebraic framework for a  study of Lusztig's canonical bases and positivity in algebraic groups. It is now well-known that these algebras are also closely related with many different areas of mathematics (see, e.g., \cite{Ke} for an account of these connections). A particular analogy exists between combinatorial aspects of cluster algebras and Kac-Moody algebras: roughly speaking, cluster algebras are associated with skew-symmetrizable matrices while Kac-Moody algebras correspond to (symmetrizable) generalized Cartan matrices. 
%The goal of this paper is to describe an interplay between these two classes of matrices in size $3$, characterizing the skew-symmetrizable matrices associated with the generalized Cartan matrices. (We will deal with combinatorial aspects of these algebras; we will not need or use their algebraic properties, including their definition.)

%The goal of this paper to describe a particular analogy between cluster algebras and Kac-Moody algebras: roughly speaking, cluster algebras are associated with skew-symmetrizable matrices while Kac-Moody algebras correspond to (symmetrizable) generalized Cartan matrices. In this paper, we describe an interplay between these two classes of matrices. 
%We will deal with the combinatorial aspects of these algebras, so we will not need their definition nor their algebraic properties.
To state our results, we need some terminology. Let us recall that an integer matrix $B$ is skew-symmetrizable if $DB$ is skew-symmetric for some diagonal matrix $D$ with positive diagonal entries. 
%One of the main inventions in the theory of cluster algebras is an explicitly-defined, yet mysterious, operation, called mutation, on skew-symmetrizable matrices. More precisely, 
For any matrix index $k$, the mutation of a skew-symmetrizable matrix $B$ at $k$ is another skew-symmetrizable matrix $\mu_k(B)=B'$: 
\begin{displaymath}
B' = \left\{ \begin{array}{ll}
%E_{k,k}=-1 & \\
B'_{i,j}=-B_{i,j} & \textrm{if $i=k$ or $j=k$}\\
%E_{k,j}=max\{0,-B_{k,j}\} & \textrm{if $j\ne k$}\\
B'_{i,j}=B_{i,j}+sgn(B_{i,k})[B_{i,k}B_{k,j}]_+ & \textrm{else}
\end{array} \right.
\end{displaymath}
(where we use the notation $[x]_+=max\{x,0\}$ and $sgn(x)=x/|x|$ with $sgn(0)=0$). 
Mutation is an involutive operation, so repeated mutations give rise to the \emph{mutation-equivalence} relation on skew-symmetrizable matrices. 

%; each mutation-equivalence class uniquely determines, in particular, a cluster algebra. Therefore it is natural to ask for an explicit description and classification of these mutation classes. 

On the other hand, %Let us also recall a related combinatorial construction from \cite{CAII}: 
motivated by the Dynkin diagram construction in the thory of Kac-Moody algebras \cite{K}, 
for any skew-symmetrizable $n\times n$ matrix $B$, a directed graph $\Gamma (B)$, called diagram of $B$, is associated in \cite{CAII} as follows: the vertices of $\Gamma (B)$ are the indices $1,2,...,n$ such that there is a directed edge from $i$ to $j$ if and only if $B_{ij} > 0$, and this edge is assigned the weight $|B_{ij}B_{ji}|\,$. 
Let us note that if $B$ is not skew-symmetric, then the diagram $\Gamma(B)$ does not determine $B$ as there could be several different skew-symmetrizable matrices whose diagrams are equal; however, if a skew-symmetrizing matrix $D$ is fixed, then $\Gamma(B)$ determines $B$. In any case, we use the general term diagram to mean the diagram of a skew-symmetrizable matrix. Then the mutation $\mu_k$ can naturally be viewed as a transformation on diagrams (see Section~\ref{sec:pre}
for a description). In the particular case where the vertex $k$ is a source (resp. sink), i.e. all incident edges are oriented away (resp. towards) $k$, then $\mu_k$ acts by only reversing all edges incident to $k$; in that case we also call $\mu_k$ a reflection (as in classical Bernstein-Gelfand-Ponomarev reflection functors). 
Note also that if $B$ is skew-symmetric then the diagram $\Gamma(B)$ may be viewed as a quiver and the corresponding mutation operation is also called quiver mutation . There are several categorical interpretations of the quiver mutation, we refer to \cite{Ke} for a survey.

%We also use the following terminology: a diagram $\Gamma$ is mutation-cyclic if any diagram which is mutation-equivalent to $\Gamma$ is cyclic, otherwise we call it mutation-acyclic

%???$\Gamma$ is mutation-cyclic if any diagram which is mutation-equivalent to $\Gamma$ is cyclic, otherwise we call it mutation-acyclic

%This operation of mutation is  defined by an explicit, yet mysterious, piecewise-linear formula (Definition~\ref{def:mut-skew-symmetrizable}). It can also be viewed more combinatorially as an operation on graphs (Section~\ref{sec:skew-mut}). It also has several category theoretical interpretations \cite{Ke}. Mutation defines an equivalence relation on skew-symmetrizable matrices; each mutation-equivalence class uniquely determines, in particular, a cluster algebra. Therefore it is natural to ask for an explicit description and classification of these mutation classes. 

Given the appearance of the mutation operation in many different areas of mathematics, it is natural to study properties of the mutation classes of skew-symmetrizable matrices and the associated diagrams. Currently, a description of these classes are known for finite and affine types \cite{BGZ,S3}, there is also a classification for the so-called finite mutation type diagrams \cite{FSTu2}. In this paper, we consider the next basic case of size $3$ skew-symmetrizable matrices, which is crucial to understand the mutation operation in general size. To be able to state our results,
%We generalize and strenghten results ??? for the special case of skew-symmetric matrices (equivalently quivers). 
let us recall a little bit more terminology.
By a {subdiagram} of $\Gamma$, we always mean a diagram obtained from $\Gamma$ by taking an induced (full) directed subgraph on a subset of vertices and keeping all its edge weights the same as in $\Gamma$. By a cycle we mean a subdiagram whose vertices can be labeled by elements of $\ZZ/m\ZZ$ so that the edges betweeen them are precisely $\{i,i+1\}$ for $i \in  \ZZ/m\ZZ$. We call a diagram $\Gamma$ \emph{mutation-acyclic} if it is mutation-equivalent to an acyclic diagram (i.e. a diagram which has no oriented cycles at all); otherwise we call it \emph{mutation-cyclic}. Now we can state our first main result:

\begin{theorem}\label{th:minimum}
Suppose that $M$ is a mutation class of diagrams with $3$ vertices. For any $\Gamma$ in $M$, let $s(\Gamma)$ denote the sum of the square roots of the weights in $\Gamma$. Then there is a diagram $\Gamma_0$ in $M$ such that $s(\Gamma_0)$ is minimal.
Furthermore, we have the following:
\begin{enumerate}

\item[(i)]
If $M$ is a mutation class of mutation-cyclic diagrams, then $\Gamma_0$ is unique upto a change of orientation which reverses all edges (and upto an enumeration of vertices). 
%change of orientation, such that $s(\Gamma_0)$ is minimal. (???if mutation-cyclic then $\Gamma_0$ is unique???)

%The diagram $\Gamma_0$ is unique up to a change of orientation.
\item[(ii)]
If $M$ is the mutation class of an acyclic diagram, then $\Gamma_0$ is acyclic and it is unique upto a reflection at a source or sink (and upto an enumeration of vertices).

\end{enumerate}

\end{theorem}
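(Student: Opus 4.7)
My plan is to reduce the problem to explicit formulas on the three edge weights. Writing $w_1,w_2,w_3$ for the edge weights of a $3$-vertex diagram, indexed so that $w_k$ sits on the edge opposite vertex $k$ (with $w_k=0$ if that edge is absent), a direct computation from the matrix mutation rule together with the skew-symmetrizability relation $B_{ji}=-(d_i/d_j)B_{ij}$ yields the clean identity
\[\sqrt{w'_k}=\bigl|\sqrt{w_iw_j}-\sqrt{w_k}\bigr|,\]
while the other two weights are unchanged. A sign analysis on the mutated matrix entries further shows that the new diagram is cyclic iff $\sqrt{w_k}\le\sqrt{w_iw_j}$ (with the cyclic orientation then being reversed) and acyclic otherwise. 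This identity is the engine of the proof.

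Since only $w_k$ changes under $\mu_k$, we have $s(\mu_k\Gamma)-s(\Gamma)=\sqrt{w'_k}-\sqrt{w_k}$; expanding the absolute value yields the trichotomy that $\mu_k$ strictly decreases $s$ iff $0<w_iw_j<4w_k$, preserves $s$ iff $w_iw_j\in\{0,4w_k\}$, and strictly increases $s$ iff $w_iw_j>4w_k$. Hence a local $s$-minimum is exactly a diagram satisfying, for each $k$, either $w_iw_j=0$ or $w_iw_j\ge 4w_k$. Existence of a global minimum in $M$ follows since any bound $s(\Gamma)\le s_0$ forces the three integer weights to lie in $\{0,\dots,\lfloor s_0^2\rfloor\}$, leaving finitely many candidate triples. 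Next I would show the dichotomy matches the two clauses of the theorem: if the minimum $\Gamma_0$ has a zero weight then it is already acyclic; if all weights are positive then $w_iw_j\ge 4w_k>w_k$ for every $k$, so every mutation of $\Gamma_0$ is in the cyclic regime above, and a straightforward induction along mutation paths shows every diagram reachable from $\Gamma_0$ is cyclic, i.e.\ $M$ is mutation-cyclic.

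The hard part is uniqueness. Given two $s$-minima $\Gamma_0,\Gamma_1$ in $M$, join them by a mutation path; the plan is a path-surgery argument showing that any ``up-then-down'' excursion in $s$ can be eliminated or absorbed into $s$-preserving steps, reducing to the case where $s$ is constant along the whole path. For (i), an $s$-preserving mutation at a cyclic diagram with positive weights requires $w_iw_j=4w_k$, in which case the formula gives $w'_k=w_k$: the multiset of weights is invariant and only the cyclic orientation reverses, forcing $\Gamma_0$ and $\Gamma_1$ to have the same weight multiset and differ only by simultaneous reversal of all edges. For (ii), mutations at a source or sink of an acyclic $3$-vertex diagram act as reflections (preserving weights and acyclicity), whereas mutation at the middle vertex strictly increases $s$ (apply the formula with $w_k=0$); the same path-surgery argument then forces a mutation path between two acyclic minima to consist entirely of source/sink reflections, yielding the desired uniqueness. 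The main obstacle is justifying this reduction to $s$-constant paths: it requires a careful local analysis at each peak of $s$, using the explicit mutation rule to show that adjacent $s$-increasing and $s$-decreasing moves can be rearranged into $s$-preserving moves or cancelled altogether.
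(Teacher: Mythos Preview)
Your engine formula $\sqrt{w'_k}=|\sqrt{w_iw_j}-\sqrt{w_k}|$ is only valid when $\Gamma$ is an \emph{oriented} triangle. For a non-oriented triangle (three edges, no oriented cycle) the middle vertex $k$ gives $\sqrt{w'_k}=\sqrt{w_iw_j}+\sqrt{w_k}$, and source/sink mutations leave $w_k$ unchanged. This breaks your dichotomy: ``acyclic'' does \emph{not} mean ``has a zero weight.'' A non-oriented triangle such as $(1,1,1)^-$ is an acyclic local $s$-minimum with all weights positive, yet it violates $w_iw_j\ge 4w_k$; so your inference ``all weights positive at a local minimum $\Rightarrow$ $M$ mutation-cyclic'' is false as written. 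You need to split according to whether $\Gamma_0$ is cyclic or acyclic, not according to whether a weight vanishes, and derive the inequalities $w_iw_j\ge 4w_k$ only in the cyclic case. Relatedly, your remark ``apply the formula with $w_k=0$'' for the middle vertex of an acyclic minimum is exactly this conflation.

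Once that is repaired, the structure of your argument aligns with the paper's, but both your ``straightforward induction'' and your peak-surgery hinge on a lemma you never isolate: if a cyclic $\Gamma$ admits two distinct vertices $i\neq j$ with $s(\mu_i\Gamma)<s(\Gamma)$ and $s(\mu_j\Gamma)\le s(\Gamma)$, then $\Gamma$ has an edge of weight $<4$ (and hence is mutation-acyclic). The paper proves this by a two-line inequality manipulation (multiply $bc<2a$ by $c$ and combine with $ac\le 2b$) and uses it as the pivot of the whole proof: in the mutation-cyclic case it forbids any peak along a shortest path between two local minima, so $s$ must be constant along that path, giving uniqueness immediately. Your induction that a cyclic local minimum has mutation-cyclic class also follows from this lemma (all radical weights stay $\ge 2$ along non-backtracking paths), but it is not ``straightforward'' without it.

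For part~(ii) the paper does \emph{not} attempt an abstract path-surgery. Instead it observes that, by the peak lemma and a short monotonicity argument, every intermediate $\Gamma_l$ along a shortest path between two acyclic minima has an edge of weight $<4$; since the diagram comes from a skew-symmetrizable matrix this edge has weight $1$, $2$, or $3$, and the paper then runs an explicit (and fairly lengthy) enumeration of the possible intermediate weight-triples in each case, checking directly that the endpoints differ by source/sink reflections. Your proposed ``careful local analysis at each peak'' would have to reproduce essentially this enumeration; there does not seem to be a clean confluence-style shortcut, and you should expect the acyclic case to require genuine casework rather than a uniform rewriting argument.
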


\noindent
Note that in this theorem part (ii) generalizes and strengthens \cite[Theorem~9.1]{FSTu} (which claims uniqueness upto an arbitrary change of orientation for quivers). In fact, part (ii) establishes a special case (rank three) of a standard conjecture of cluster algebra theory \cite[Conjecture~4.14 (4)]{FZ-CDM03}, which states that mutation-equivalent acyclic diagrams can be obtained from each other by a sequence of reflections at sources or sinks. For quivers, this conjecture was obtained in \cite[Corollary~4]{CK} using categorical methods. We use more elementary algebraic-combinatorial methods. Let us also note that a numerical criterion to check whether a given diagram with three vertices is mutation-acyclic has been obtained by the author in \cite{S5}. (This criterion is recalled in Theorem~\ref{th:acyclic}).
%(Recall that in this paper we consider uniqueness of diagrams upto an enumeration of vertices.)
%Note this implies in particular that any two 

We also characterize $\Gamma_0$ using a ``local" property, generalizing \cite[Lemma~2.1]{BBH} and \cite[Theorem~9.1(3)]{FSTu}):

\begin{theorem}\label{th:minimum2}
Suppose that $M$ is a mutation class of diagrams with $3$ vertices. Let $\Gamma_0$ be the diagram in $M$ such that $s(\Gamma_0)$ is minimal as in the Theorem~\ref{th:minimum}. Then we have the following:
%Then $\Gamma_0$ is the unique, with the same properties as in Theorem~\ref{th:minimum}, diagram in $M$ such that, for each vertex $i$, we have $s(\Gamma_0)\leq s(\mu_i(\Gamma_0$.

%Furthermore, for each $\Gamma$ in $M$, there is a sequence $\{\mu_i\}$ of mutations with $\Gamma_0=\mu_n...\mu_1(\Gamma)$ such that for $\Gamma_i=\mu_i...\mu_1(\Gamma)$ we have $s(\Gamma_i)<s(\Gamma_{i+1})$. Furthermore, if $\Gamma$ is mutation-cyclic then the sequence $\{\mu_i\}$ is uniquely determined: specifically, the vertex $i$ is the vertex which is not incident to the edge with maximal weight in $\Gamma_{i-1}$,(here when $i=1$, it is assumed to be vertex in $\Gamma$ ). Conversely, for any \emph{maximal} sequence $\{\mu_i\}$ such that $s(\Gamma_i)<s(\Gamma_{i+1})$ with $\Gamma_i=\mu_i...\mu_1(\Gamma)$, we have $\Gamma_0=\mu_1...\mu_n(\Gamma)$. ???orientation???

\begin{enumerate}

\item[(i)]
$\Gamma_0$ is the unique, upto the same conditions as in Theorem~\ref{th:minimum}, diagram in $M$ such that, for each vertex $i$, we have $s(\Gamma_0)\leq s(\mu_i(\Gamma_0))$.

\item[(ii)]
For each $\Gamma$ in $M$, there is a (possibly empty) sequence $\{\mu_i\}$ of mutations with $\Gamma_0=\mu_1...\mu_n(\Gamma)$ such that for $\Gamma_{i-1}=\mu_i...\mu_n(\Gamma)$ we have $s(\Gamma_{i-1})<s(\Gamma_{i})$, here $i=1,...,n$ with $\Gamma_{n}=\Gamma$. Furthermore, if $\Gamma$ is mutation-cyclic then the sequence $\{\mu_i\}$ is uniquely determined: specifically, the vertex $i$ is the vertex which is not incident to the edge with maximal \footnote{We will show that $\Gamma_{i+1}$ has a unique edge with maximal weight (Lemma~\ref{lem:mut-cyc-non-min}). } weight in $\Gamma_{i+1}$, $i=0,1,...,n-1$.  %???orientation???

% when $i=1$, it is assumed to be vertex in $\Gamma$ )???orientation???

Conversely, for any \emph{maximal} sequence $\{\mu_i:i=1,...,n\}$ such that $s(\Gamma_{i-1})<s(\Gamma_{i})$ with $\Gamma_{i-1}=\mu_i...\mu_n(\Gamma)$ and $\Gamma_n=\Gamma$, we have $\Gamma_0=\mu_1...\mu_n(\Gamma)$. 

\end{enumerate}

\end{theorem}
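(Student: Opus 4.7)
The plan is to establish part~(i) by observing that the global minimum $\Gamma_0$ of Theorem~\ref{th:minimum} trivially satisfies the local inequality $s(\Gamma_0)\leq s(\mu_i(\Gamma_0))$ for every vertex $i$; the substance of part~(i) is the converse, that any locally minimal $\Gamma$ coincides with $\Gamma_0$ up to the stated symmetries. Part~(ii) will then follow from an explicit descent combined with a termination argument on the well-ordered values of $s$.

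The key analytic ingredient is a precise formula for $s(\mu_k(\Gamma))-s(\Gamma)$ on a $3$-vertex diagram. Since $\mu_k$ leaves invariant the weight of the edge not incident to $k$, this change depends only on how the two edge weights at $k$ transform. First I would tabulate the mutation formulas for these weights in each of the combinatorially distinct local configurations at $k$ (source/sink, passing vertex in an oriented $3$-cycle, passing vertex in an acyclic orientation), obtaining in each case a short algebraic identity expressing the new weights of the two affected edges in terms of the old ones and the fixed opposite weight.

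For the mutation-cyclic case, I would combine Theorem~\ref{th:acyclic} (which ensures that cyclicity is preserved under mutation within a mutation-cyclic class) with the explicit weight formulas to prove the auxiliary Lemma~\ref{lem:mut-cyc-non-min} alluded to in the footnote: any non-minimal cyclic $\Gamma\in M$ has a unique maximum-weight edge $e$; mutation at the vertex opposite $e$ strictly decreases $s$; and mutation at either endpoint of $e$ strictly increases $s$. This lemma immediately yields both the local characterization in part~(i)---a locally minimal cyclic $\Gamma$ admits no $s$-decreasing mutation, so by the contrapositive of the lemma $\Gamma=\Gamma_0$---and the step-by-step uniqueness in part~(ii), since at each non-terminal step the mutating vertex is forced to be the one opposite the unique max-weight edge. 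Iterating this ``opposite-to-max'' mutation strictly decreases $s$, hence terminates at a locally minimal diagram, which by part~(i) equals $\Gamma_0$.

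For the mutation-acyclic case, I would invoke Theorem~\ref{th:minimum}(ii): all acyclic members of $M$ are reflections of $\Gamma_0$ and share the common value $s(\Gamma_0)$, while any cyclic $\Gamma\in M$ satisfies $s(\Gamma)>s(\Gamma_0)$ by the cyclic analysis applied to the cyclic subclass entered from $\Gamma$. A strictly decreasing sequence from an arbitrary $\Gamma$ is then obtained by iteratively choosing any $s$-decreasing mutation (whose existence is guaranteed by the local characterization already proved), and it terminates at an acyclic reflection of $\Gamma_0$. The converse statement concerning maximal strictly decreasing sequences is immediate from the same termination argument, since any such sequence can only halt at a locally minimal diagram. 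The main obstacle in this program is the weight-mutation case analysis underpinning Lemma~\ref{lem:mut-cyc-non-min}---specifically, verifying that in a non-minimal cyclic triangle exactly one of the three possible mutations strictly decreases $s$; once this Markov-type inequality is in place, the rest of the theorem follows by well-founded descent on the values of $s$.
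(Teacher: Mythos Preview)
Your treatment of the mutation-cyclic case of part~(i) and of part~(ii) is essentially the paper's approach: both rest on Lemma~\ref{lem:mut-cyc-non-min} (whose proof in the paper goes through Lemma~\ref{lem:BBH-Lemma2.1c}: two distinct $s$-non-increasing mutations force the class to be mutation-acyclic), after which well-founded descent on $s$ finishes the job. The paper explicitly records that part~(ii) follows from part~(i), Theorem~\ref{th:minimum}, and Lemma~\ref{lem:mut-cyc-non-min}, exactly as you outline.

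The gap is in the mutation-acyclic case of part~(i). You invoke Theorem~\ref{th:minimum}(ii) for the claim ``all acyclic members of $M$ are reflections of $\Gamma_0$ and share the common value $s(\Gamma_0)$,'' but the literal statement of Theorem~\ref{th:minimum}(ii) says only that the \emph{global} minimizer is acyclic and unique up to reflection; it does not assert that every acyclic diagram in $M$ already attains the minimal value of $s$. Since any acyclic three-vertex diagram is automatically a \emph{local} minimum (mutation at a source or sink preserves $s$, and the remaining mutation strictly increases it), the implication you actually need is ``local minimum $\Rightarrow$ global minimum''---which is precisely part~(i) itself. The appeal is circular. Your auxiliary remark that a cyclic $\Gamma\in M$ has $s(\Gamma)>s(\Gamma_0)$ ``by the cyclic analysis applied to the cyclic subclass'' does not help either: that analysis is valid only inside a mutation-cyclic class, and here $M$ is mutation-acyclic.

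In the paper this circularity never arises because Theorems~\ref{th:minimum} and~\ref{th:minimum2}(i) are proved \emph{simultaneously}. Given two local minima $\Gamma_0$, $\Gamma_0'$ connected by a shortest mutation path, one applies Lemma~\ref{lem:BBH-Lemma2.1c} at the $s$-maximum $\Gamma_m$ along the path (this is the step that your plan does not isolate) to force an edge of weight $<4$ into every intermediate $\Gamma_l$, and then carries out a lengthy case analysis on that small weight---separating weights $1$, $2$, $3$, the skew-symmetric versus non-skew-symmetric cases, and tree versus triangle shapes---to exhibit $\Gamma_0'$ as a source/sink reflection of $\Gamma_0$. That case analysis is the substantive content of the mutation-acyclic half of the theorem, and your proposal does not address it.
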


We also obtain the following result which gives a new numerical invariant for the mutation of diagrams with any number of vertices.

\begin{theorem}\label{th:gcd}
Suppose that $\Gamma$ is a diagram with n vertices. For any vertex $i$ in $\Gamma$, let $\delta_i=\delta_i(\Gamma)$ be the greatest common divisor of the weights of the edges which are incident to $i$. Let $\delta(\Gamma)=(\delta_1,\delta_2,...,\delta_n)$ be the ordered sequence of these greatest common divisors such that $\delta_1\geq \delta_2 \geq... \geq \delta_n$. Then for any $\Gamma'$ which is mutation-equivalent to $\Gamma$, we have $\delta(\Gamma)=\delta(\Gamma')$. 

\end{theorem}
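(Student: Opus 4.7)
\smallskip

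\noindent\textbf{Proof plan.} The plan is to prove the stronger statement that, for each individual vertex $i$, the number $\delta_i$ is preserved by a single mutation $\mu_k$; this immediately implies the invariance of the sorted tuple $\delta(\Gamma)$ throughout a mutation class. Let $\Gamma' = \mu_k(\Gamma)$, let $w_{ij}$ and $w'_{ij}$ denote the corresponding weights, and fix a skew-symmetrizer $D = \mathrm{diag}(d_1,\ldots,d_n)$ (which is well known to be preserved by mutation).

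I will first dispose of the easy cases. For $i = k$ mutation only flips orientations of the edges at $k$, so all weights at $k$, and hence $\delta_k$, are unchanged; and for $i \ne k$ not adjacent to $k$, mutation at $k$ does not touch any edge incident to $i$. For the main case ($i \ne k$ adjacent to $k$, and $j \ne i,k$) I plan to unpack the mutation rule on $B_{ij}$ and $B_{ji}$ via a short sign analysis, which will yield the following weight-level formula: either $w'_{ij} = w_{ij}$, or
\[
w'_{ij} \;=\; w_{ij} + w_{ik}w_{jk} \,\pm\, 2\sqrt{w_{ij}\,w_{ik}\,w_{jk}},
\]
where the sign is determined by the signs of $B_{ik}, B_{jk}, B_{ij}$ but will be immaterial below.

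The key lemma to establish is the ``triangle identity'' that $\sqrt{w_{ij}w_{ik}w_{jk}}$ is always a non-negative integer. Using the defining relations $d_p B_{pq} = -d_q B_{qp}$ one has $w_{pq} = |B_{pq}||B_{qp}|$, and a direct calculation then gives
\[
w_{ij}\,w_{ik}\,w_{jk} \;=\; \bigl(|B_{ij}|\,|B_{jk}|\,|B_{ki}|\bigr)^2.
\]
Setting $N = \sqrt{w_{ij}w_{ik}w_{jk}}\in\ZZ$, and using that $\delta_i$ divides both $w_{ij}$ and $w_{ik}$, we get $\delta_i^2 \mid w_{ij}w_{ik} \mid N^2$. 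Since $a^2 \mid b^2$ implies $a \mid b$ for positive integers (compare prime factorizations), $\delta_i \mid N$. Combined with the obvious $\delta_i \mid w_{ik}w_{jk}$ this gives $\delta_i \mid w'_{ij}$.

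Thus $\delta_i(\Gamma)$ divides every weight incident to $i$ in $\Gamma'$, so $\delta_i(\Gamma) \mid \delta_i(\Gamma')$; since $\mu_k$ is an involution, swapping the roles of $\Gamma$ and $\Gamma'$ gives the reverse divisibility, and we conclude $\delta_i(\Gamma) = \delta_i(\Gamma')$. The main obstacle in the plan is deriving the weight-level mutation formula and the triangle identity; once those are in place the rest is routine divisibility bookkeeping, and the corner cases (when $w_{ij}$ or $w'_{ij}$ is zero, so an edge appears or disappears) are handled verbatim by the same estimates.
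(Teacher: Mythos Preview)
Your proposal is correct and follows essentially the same route as the paper: reduce to a single mutation $\mu_k$, dispose of the cases $i=k$ and $i$ not adjacent to $k$ trivially, expand the weight formula $w'_{ij}=w_{ij}+w_{ik}w_{jk}\pm 2\sqrt{w_{ij}w_{ik}w_{jk}}$, observe that $\sqrt{w_{ij}w_{ik}w_{jk}}$ is an integer divisible by $\delta_i$, and finish by the involutivity of $\mu_k$. The only cosmetic difference is that the paper invokes the defining perfect-square condition on cycles for the integrality of $\sqrt{w_{ij}w_{ik}w_{jk}}$, whereas you rederive it as $(|B_{ij}||B_{jk}||B_{ki}|)^2$ from the skew-symmetrizer; and you spell out the step $\delta_i^2\mid N^2\Rightarrow\delta_i\mid N$ that the paper leaves implicit.
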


\noindent
Note that if $\Gamma$ is the diagram of a skew-symmetric matrix, then the same conlusion holds if $\delta_i$ is defined as the greatest common divisor of the radicals of the weights of the edges which are incident to the vertex $i$. (Equivalently, in the quiver notation that represents skew-symmetric matrices, the conlusion of the theorem holds if $\delta_i$ is defined as the greatest common divisor of the number of arrows in the edges which are incident to the vertex $i$.)

We prove our results in Section~\ref{sec:proof} after some preparation in Section~\ref{sec:pre}.

\section{Preliminaries}
\label{sec:pre}

In this section, we will recall some more terminology and prove some statements that we will use to prove our results. First, let us recall that the diagram of a skew-symmetrizable (integer) matrix has the following property: 

\begin{align}
\label{eq:perfect-sq}
&\text{the product of weights along any cycle is a perfect square, i.e. the square}
\\
\nonumber
&\text{of an integer. }
\end{align}

\noindent
Thus we can use the term diagram to mean a directed graph, with no loops or two-cycles, such that the edges are weighted with positive integers satisfying \eqref{eq:perfect-sq}. Let us note that if an edge in a diagram has weight equal to one, then we do not specify its weight in the picture. 
%Let us also note that we will usually consider equality of diagrams upto an enumeration of vertices.

For any vertex $k$ in a diagram $\Gamma$, the associated mutation $\mu_k$ changes $\Gamma$ as follows \cite{CAII}:
\begin{itemize} 
\item The orientations of all edges incident to~$k$ are reversed, 
their weights intact. 
\item 
For any vertices $i$ and $j$ which are connected in 
$\Gamma$ via a two-edge oriented path going through~$k$ (see  
Figure~\ref{fig:diagram-mutation-general}), 
the direction of the edge $\{i,j\}$ in $\mu_k(\Gamma)$ and its weight $\gamma'$ are uniquely determined by the rule 
%the diagram mutation affects the edge connecting $i$ and $j$ in the way shown
%, where the weights $c$ and $c'$ are 
%related by 
\begin{equation} 
\label{eq:weight-relation-general} 
\pm\sqrt {\gamma} \pm\sqrt {\gamma'} = \sqrt {\alpha\beta} \,, 
\end{equation} 
where the sign before $\sqrt {\gamma}$ 
(resp., before $\sqrt {\gamma'}$) 
is ``$+$'' if $i,j,k$ form an oriented cycle 
in~$\Gamma$ (resp., in~$\mu_k(\Gamma)$), and is ``$-$'' otherwise. 
Here either $\gamma$ or $\gamma'$ can be equal to~$0$, which means that the corresponding edge is absent. 
 
\item 
The rest of the edges and their weights in $\Gamma$ 
remain unchanged. 
\end{itemize} 

\begin{figure}[ht] 
\begin{center}
\setlength{\unitlength}{1.5pt} 
\begin{picture}(30,17)(-5,0) 
\put(0,0){\line(1,0){20}} 
%\put(20,0){\vector(-1,0){12}} 
\put(0,0){\line(2,3){10}} 
\put(0,0){\vector(2,3){6}} 
\put(10,15){\line(2,-3){10}} 
\put(10,15){\vector(2,-3){6}} 
\put(0,0){\circle*{2}} 
\put(20,0){\circle*{2}} 
\put(10,15){\circle*{2}} 
\put(2,10){\makebox(0,0){$\alpha$}} 
\put(18,10){\makebox(0,0){$\beta$}} 
\put(10,-4){\makebox(0,0){$\gamma$}} 
\put(10,19){\makebox(0,0){$k$}} 
\end{picture} 
$ 
\begin{array}{c} 
\stackrel{\textstyle\mu_k}{\longleftrightarrow} 
\\[.3in] 
\end{array} 
$ 
\setlength{\unitlength}{1.5pt} 
\begin{picture}(30,17)(-5,0) 
\put(0,0){\line(1,0){20}} 
%\put(0,0){\vector(1,0){12}} 
\put(0,0){\line(2,3){10}} 
\put(10,15){\vector(-2,-3){6}} 
\put(10,15){\line(2,-3){10}} 
\put(20,0){\vector(-2,3){6}} 
\put(0,0){\circle*{2}} 
\put(20,0){\circle*{2}} 
\put(10,15){\circle*{2}} 
\put(2,10){\makebox(0,0){$\alpha$}} 
\put(18,10){\makebox(0,0){$\beta$}} 
\put(10,-4){\makebox(0,0){$\gamma'$}} 
\put(10,19){\makebox(0,0){$k$}} 
\end{picture} 
\end{center}
 
\vspace{-.2in} 
\caption{Diagram mutation} 
\label{fig:diagram-mutation-general} 
\end{figure}
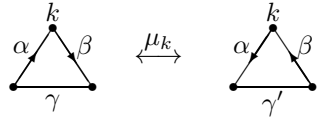

\noindent 
This operation is involutive, i.e. $\mu_k(\mu_k(\Gamma))=\Gamma$, so it defines an equivalence relation on the set of all diagrams. More precisely, two diagrams are called \emph{mutation-equivalent} if they can be obtained from each other by applying a sequence of mutations. The \emph{mutation class} of a diagram $\Gamma$ is the set of all diagrams which are mutation-equivalent to $\Gamma$. If $B$ is a skew-symmetrizable matrix, then $\Gamma(\mu_k(B))=\mu_k(\Gamma(B))$ (see 
Section~\ref{sec:intro} for the definition of $\mu_k(B)$). Let us note that if $B$ is not skew-symmetric, then the diagram $\Gamma(B)$ does not determine $B$ as there could be several different skew-symmetrizable matrices whose diagrams are equal; however, if a skew-symmetrizing matrix $D$ is fixed, then $\Gamma(B)$ determines $B$, so mutation class of $\Gamma(B)$ determines that of $B$ (the matrix $\mu_k(B)$ shares the same skew-symmetrizing matrix $D$ with $B$ \cite{CAII}).

In this paper, we will mainly consider diagrams with exactly three vertices. Therefore it will be convenient for us to use special notation for these diagrams, generalizing the one used in \cite{BBH} and \cite{FSTu}:
\begin{definition}
\label{def:(a,b,c)}
Suppose that $\Gamma$ is a three-vertex diagram with weights $\alpha,\beta$ and $\gamma$. Let $a=\sqrt{\alpha},b=\sqrt{\beta}$ and $c=\sqrt{\gamma}$. We call $a,b,c$ the \emph{radical weights} of $\Gamma$ and use the following notation: if $\Gamma$ is acyclic we write $\Gamma=(a,b,c)^-$; if $\Gamma$ is cyclic we write $\Gamma=(a,b,c)$, 
without considering any particular ordering. We denote by $s(\Gamma)$ the sum of the radical weights of $\Gamma$. By the definition of a diagram, the product of the radical weights is an integer by \eqref{eq:perfect-sq}.
\end{definition}
\noindent
Note that if $B$ is a skew-symmetric matrix then the radical weights of $\Gamma(B)$ are equal to the positive entries of $B$. Also note that this notation does not uniquely determine $\Gamma$. Nevertheless it is convenient for us because it behaves well under the mutation operation: 

\begin{proposition}\label{prop:mut-radical-weight}
Suppose that $\Gamma$ is a diagram with three vertices. Then we have the following:

\begin{enumerate}

\item[(i)]

If $\Gamma=(a,b,c)^-$ and $k$ is a vertex which is a source or sink in $\Gamma$, then $\mu_k(\Gamma)=(a,b,c)^-$.

\item[(ii)]
Suppose that $\Gamma=(a,b,c)^-$ and $k$ is a vertex which is neither a source nor sink in $\Gamma$. Also assume that $k$ is not incident to the edge with radical weight $c$. Then $\mu_k(\Gamma)=(a,b,c+ab)$

\item[(iii)]
Suppose that $\Gamma=(a,b,c)$ and $k$ is the vertex which is not incident to the edge with radical weight $c$. Then $\mu_k(\Gamma)=(a,b,ab-c)$ (resp. $\mu_k(\Gamma)=(a,b,c-ab)^-$) provided $c<ab$ (resp. $c\geq ab$).

\end{enumerate}

\end{proposition}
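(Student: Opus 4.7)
The plan is to apply the three-vertex diagram mutation rule recalled in this section directly in each case. The essential ingredient will be the weight relation $\pm\sqrt{\gamma}\pm\sqrt{\gamma'}=\sqrt{\alpha\beta}$, together with the sign convention that the sign in front of $\sqrt{\gamma}$ (respectively $\sqrt{\gamma'}$) is $+$ exactly when the three vertices form an oriented cycle in $\Gamma$ (respectively in $\mu_k(\Gamma)$). On a three-vertex diagram the only edge whose weight can possibly change under $\mu_k$ is the one opposite to $k$, so a single application of this relation plus the reversal of the edges at $k$ should completely determine $\mu_k(\Gamma)$.

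For part (i), I would first observe that when $k$ is a source or sink, no two-edge oriented path passes through $k$, so the weight relation contributes nothing and the mutation rule reduces to reversing the edges incident to $k$. A source turns into a sink (and vice versa), so $\mu_k(\Gamma)$ is still acyclic with the same radical weights $a,b,c$.

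For parts (ii) and (iii), the edges at $k$ carry radical weights $a$ and $b$, so $\sqrt{\alpha\beta}=ab$, while the edge opposite to $k$ has radical weight $c$. In (ii), the acyclicity of $\Gamma$ fixes the sign in front of $c$ as $-$, so the equation reads $-c\pm\sqrt{\gamma'}=ab$; since $\sqrt{\gamma'}\geq 0$, only the choice $+\sqrt{\gamma'}$ is admissible, giving $\sqrt{\gamma'}=ab+c$ and forcing $\mu_k(\Gamma)$ to be cyclic, i.e.\ $(a,b,c+ab)$. In (iii), the cyclicity of $\Gamma$ gives $+\sqrt{\gamma}$, so the equation becomes $c\pm\sqrt{\gamma'}=ab$: taking $+\sqrt{\gamma'}$ yields $\sqrt{\gamma'}=ab-c$, which is non-negative precisely when $c\leq ab$, with $\mu_k(\Gamma)$ cyclic, while taking $-\sqrt{\gamma'}$ yields $\sqrt{\gamma'}=c-ab$, non-negative precisely when $c\geq ab$, with $\mu_k(\Gamma)$ acyclic. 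Requiring $\gamma'\geq 0$ then produces the dichotomy stated in (iii).

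I do not anticipate a genuine obstacle here: the statement is essentially a direct unfolding of the mutation rule in the three-vertex case. The only mild point of care will be that in each case one must pick the sign of $\sqrt{\gamma'}$ that produces a non-negative weight, and this sign in turn encodes whether the mutated diagram is cyclic or acyclic.
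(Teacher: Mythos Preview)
Your argument is correct and is essentially the paper's own approach: the paper simply asserts that the proposition follows from the definition of mutation together with Lemma~\ref{lem:ss-conjugation}, and your case-by-case application of the weight relation \eqref{eq:weight-relation-general} is exactly the unpacking of that claim at the level of diagrams. The only cosmetic difference is that the paper phrases it via the skew-symmetrization $S(B)$, whereas you work directly with the diagram rule; since \eqref{eq:weight-relation-general} is precisely the diagram-level translation of mutation on $S(B)$, the two are the same.
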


\noindent
The proposition follows from the definition of the mutation operation and the following technical statement, providing skew-symmetrization by conjugation:
%\footnote{\cite{PY}}???

\begin{lemma} 
\label{lem:ss-conjugation} \cite[Proposition~8.1]{CAII}
Let $B$ be a skew-symmetrizable (integer) matrix %and suppose that $D$ is a skew-symmetrizing matrix for $B$, i.e., 
%a diagonal matrix with positive diagonal entries such that $DB$ is skew-symmetric.  
Then there exists a diagonal matrix $H$ with 
positive diagonal entries such that 
$HBH^{-1}$ is skew-symmetric. 
Furthermore, the matrix $S(B)=(S_{ij})=HBH^{-1}$ is 
uniquely determined by~$B$. 
Specifically, the matrix entries of $S(B)$ are given by 
\begin{equation} 
\label{eq:s-matrix} 
S_{ij} = 
{\rm sgn}(B_{ij}) \textstyle\sqrt{|B_{ij} B_{ji}|}\,. 
\end{equation} 
\noindent
%We call $S(B)$ the \emph{canonical skew-symmetrization} of~$B$. 
% this term is never used elsewhere 
Furthermore, for any matrix index $k$, we have $S(\mu_k(B))=\mu_k(S(B))$.

%The statement also holds for symmetrizable matrices, replacing $B$ by a quasi-Cartan companion $A$ respectively. 
The matrix $H$ can be taken as $D^{1/2}$ where $D$ is a skew-symmetrizing matrix for $B$.

\end{lemma}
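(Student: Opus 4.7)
The plan is to exhibit $H$ explicitly as $D^{1/2}$, verify skew-symmetry and the entry formula by elementary calculation, deduce uniqueness from the entry formula, and then check the mutation-compatibility statement by comparing $H\mu_k(B)H^{-1}$ with $\mu_k(HBH^{-1})$ entry-by-entry.

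First I would fix a skew-symmetrizing diagonal matrix $D$ with positive diagonal entries $d_1,\dots,d_n$, so that the relation $d_iB_{ij}=-d_jB_{ji}$ holds for all $i,j$. Setting $H=D^{1/2}$, i.e.\ the diagonal matrix with entries $\sqrt{d_i}$, I would compute $(HBH^{-1})_{ij}=\sqrt{d_i/d_j}\,B_{ij}$ and check
\[
(HBH^{-1})_{ij}+(HBH^{-1})_{ji}=\frac{1}{\sqrt{d_id_j}}\bigl(d_iB_{ij}+d_jB_{ji}\bigr)=0,
\]
which gives skew-symmetry. Next, from $d_i|B_{ij}|=d_j|B_{ji}|$ one gets $\sqrt{d_i/d_j}\,|B_{ij}|=\sqrt{|B_{ij}B_{ji}|}$ (the case $B_{ij}=0$ is trivial, and forces $B_{ji}=0$ too by skew-symmetrizability), so
\[
S_{ij}=\sqrt{d_i/d_j}\,B_{ij}=\mathrm{sgn}(B_{ij})\sqrt{|B_{ij}B_{ji}|}.
\]
Uniqueness of $S(B)$ then follows immediately, since the right-hand side involves only the entries of $B$.

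For the compatibility with mutation, I would use that $\mu_k(B)$ is again skew-symmetrizable with the \emph{same} matrix $D$ (by the fact cited from \cite{CAII}), so $S(\mu_k(B))=H\mu_k(B)H^{-1}$. It then suffices to verify $H\mu_k(B)H^{-1}=\mu_k(HBH^{-1})$ entry-by-entry. When $i=k$ or $j=k$ both sides equal $-\sqrt{d_i/d_j}\,B_{ij}$, so the claim is immediate. When $i,j\neq k$, pulling the positive scalar $\sqrt{d_i/d_j}$ through the conjugation on the left and through the bracket $[\,\cdot\,]_+$ on the right yields on both sides
\[
\sqrt{d_i/d_j}\,B_{ij}+\mathrm{sgn}(B_{ik})\sqrt{d_i/d_j}\,[B_{ik}B_{kj}]_+,
\]
once one observes that $\mathrm{sgn}(S(B)_{ik})=\mathrm{sgn}(B_{ik})$ and $S(B)_{ik}S(B)_{kj}=\sqrt{d_i/d_j}\,B_{ik}B_{kj}$ with a positive scalar that can be factored out of the positive part.

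The only mildly delicate step is the last one: keeping track of how the positive scaling factors interact with the $\mathrm{sgn}$ and $[\,\cdot\,]_+$ operations in the mutation rule, and making sure that the two different definitions (mutation at $k$ of the skew-symmetrizable $B$ versus mutation at $k$ of the skew-symmetric conjugate $S(B)$) produce matching off-diagonal corrections. Once this bookkeeping is done, all four assertions of the lemma follow without further difficulty.
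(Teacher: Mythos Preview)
Your argument is correct in every part: the choice $H=D^{1/2}$, the verification of skew-symmetry and of the entry formula~\eqref{eq:s-matrix}, the uniqueness deduction, and the entry-by-entry check of mutation-compatibility (the key observation that $\sqrt{d_i/d_j}>0$ can be pulled through both $\mathrm{sgn}$ and $[\,\cdot\,]_+$ is exactly right).

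As for comparison: the paper does not give its own proof of this lemma. It is quoted from \cite[Proposition~8.1]{CAII} and simply recorded for later use, so there is nothing in the present paper to compare your argument against. Your write-up supplies what the paper omits.
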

\noindent 
%This lemma is obtained by direct computation with $H=D^{1/2}$. For skew-symmetrizable matrices it was obtained in \cite[Proposition~8.1]{CAII}; the same proof works in the symmetrizable case as well.

Let us also record the following statements for convenience; they can be checked easily using the definition of the mutation operation:

\begin{proposition}\label{prop:weight-s}
%(LEMMA2(i,ii)) 
Suppose that $\Gamma$ is a diagram with three vertices. Then we have the following:

\begin{enumerate}

\item[(i)]
$s(\Gamma)=s(\mu_k(\Gamma))$ if and only if $\Gamma$ and $\mu_k(\Gamma)$ have the same weights; furthermore:

\begin{enumerate}

\item[(a)]
the diagrams $\Gamma$ and $\mu_k(\Gamma)$ are both cyclic or both acyclic,

\item[(b)]
if $\Gamma$ is acyclic, then the vertex $k$ is a source or sink in $\Gamma$.
\end{enumerate}

% \footnote{note then that the diagrams $\Gamma$ and $\mu_k(\Gamma)$ are both cyclic or both acyclic}. 

%??? If, furthermore, $\Gamma$ and $\mu_k(\Gamma)$ are  cyclic, then $\mu_k(\Gamma)$ can be obtained from $\Gamma$ by reversing all the edges???
\item[(ii)]
$s(\Gamma)>s(\mu_k(\Gamma))$ if and only if the edge which is not incident to the vertex $k$ has smaller weight in $\mu_k(\Gamma)$ than in $\Gamma$ (the weights of the remaining edges are equal).

\end{enumerate}

\end{proposition}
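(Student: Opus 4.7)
The plan is to reduce everything to a straightforward case analysis using Proposition~\ref{prop:mut-radical-weight}. A preliminary observation simplifies things considerably: in all cases listed in that proposition, the two edges incident to the mutation vertex $k$ retain their radical weights $a$ and $b$, while only the third edge, of radical weight $c$ and not incident to $k$, can change. In particular $s(\Gamma)-s(\mu_k(\Gamma))$ equals the old radical weight $c$ minus the new one, and the parenthetical remark in part~(ii) (that the remaining two edges have equal weights before and after) is automatic.

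The next step is to tabulate the four possibilities. When $\Gamma$ is acyclic and $k$ is a source or sink, nothing changes at all, so $s(\Gamma)=s(\mu_k(\Gamma))$. When $\Gamma=(a,b,c)^-$ is acyclic but $k$ is not a source or sink, $\mu_k(\Gamma)=(a,b,c+ab)$ and $s$ strictly increases by $ab>0$. When $\Gamma=(a,b,c)$ is cyclic with $c<ab$, $\mu_k(\Gamma)=(a,b,ab-c)$ is still cyclic and $s(\Gamma)-s(\mu_k(\Gamma))=2c-ab$. When $\Gamma=(a,b,c)$ is cyclic with $c\geq ab$, $\mu_k(\Gamma)=(a,b,c-ab)^-$ is acyclic and $s$ strictly decreases by $ab>0$.

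From this table, part~(i) follows directly. The equality $s(\Gamma)=s(\mu_k(\Gamma))$ can hold only in the first case or in the third case with $c=ab/2$ (where $ab-c=c$); in both of these sub-cases the new radical weight of the third edge equals $c$, so all three edge weights are preserved, the two diagrams are of the same type (both acyclic or both cyclic), and in the acyclic case $k$ is a source or sink, establishing (a) and (b). The converse implication (equal weights imply equal sums) is trivial. For part~(ii), the radical weight of the edge not incident to $k$ strictly decreases in exactly the third case with $c>ab/2$ and in the fourth case, and in both of these the table shows $s$ also strictly decreases; reading the table in the other direction gives the converse.

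The argument is genuinely routine once Proposition~\ref{prop:mut-radical-weight} has been invoked, so there is no substantive obstacle. The only mild subtlety worth noting is the cyclic-to-cyclic sub-case with $c=ab/2$, which must be accounted for in part~(i): there the mutation is not a reflection (the vertex $k$ is a ``middle'' vertex of the cyclic diagram), yet the multiset of weights happens to be preserved because $ab-c=c$.
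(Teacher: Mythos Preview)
Your proof is correct and matches the paper's intent: the paper does not give a detailed argument for this proposition, stating only that it ``can be checked easily using the definition of the mutation operation,'' and your case analysis via Proposition~\ref{prop:mut-radical-weight} is precisely such a check written out in full. The one point worth highlighting---the cyclic sub-case $c=ab/2$ where the weights are preserved without $k$ being a source or sink---is handled correctly and is consistent with how the paper later uses this proposition.
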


\begin{proposition}\label{prop:weight 4}
Suppose that $\Gamma$ is a three-vertex diagram which has an edge whose weight is less than $4$. Then $\Gamma$ is mutation-acyclic.

%Furthermore, if $\Gamma$ is cyclic, then there is a vertex $k$ such that $s(\Gamma)>s(\mu_k(\Gamma))$; in particular, $s(\Gamma)$ is not minimal.

\end{proposition}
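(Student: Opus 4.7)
The plan is to show that for a cyclic three-vertex diagram satisfying the hypothesis, a single carefully chosen mutation strictly decreases $s(\Gamma)$ while preserving the hypothesis, and then to iterate until the diagram becomes acyclic.

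First I would relabel the radical weights so that $a \geq b \geq c$; the hypothesis then reads $c < 2$. If $\Gamma$ is acyclic there is nothing to prove, so assume $\Gamma = (a,b,c)$ is cyclic, and let $k$ be the vertex not incident to the edge of radical weight $a$ (the vertex opposite the maximum-weight edge). By Proposition~\ref{prop:mut-radical-weight}(iii), applied with the roles of $a$ and $c$ in that statement interchanged, $\mu_k(\Gamma)$ leaves the edges of radical weights $b$ and $c$ unchanged and replaces the remaining edge by one of radical weight $|bc-a|$; moreover, $\mu_k(\Gamma)$ is acyclic exactly when $a \geq bc$.

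If $a \geq bc$ we are done. If $a < bc$, the decisive inequality is $bc \leq ac < 2a$, which follows immediately from $b \leq a$ and $c < 2$; this yields $0 < a' := bc - a < a$, so $s(\mu_k(\Gamma)) < s(\Gamma)$. Since the edge of radical weight $c$ is incident to $k$ and therefore preserved, $\mu_k(\Gamma)$ still contains an edge of weight $c^2 < 4$, so the construction iterates.

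The remaining step is termination of the iteration, which I expect to be the easiest ingredient. Each radical weight $r$ arising in any diagram of the sequence satisfies $r^2 \in \ZZ_{>0}$ and $r \leq s(\Gamma_0)$, so $r$ lies in a finite set; hence only finitely many triples $(a,b,c)$ are available, and a strictly decreasing sequence of $s$-values among them must terminate. Thus the iteration eventually produces an acyclic diagram, proving $\Gamma$ is mutation-acyclic. The essential insight is the choice of vertex: mutating opposite the maximum-weight edge converts the hypothesis $c < 2$ directly into the strict inequality $bc < 2a$ that guarantees the decrease in $s$.
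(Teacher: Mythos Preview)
Your proof is correct, but it takes a genuinely different route from the paper's. The paper argues by exhaustive case analysis on the minimum radical weight $c\in\{1,\sqrt{2},\sqrt{3}\}$: in each case it writes down an explicit mutation sequence of length at most three (mutating first at the vertex opposite the largest radical weight, then possibly once or twice more) and checks directly that the result is acyclic. Your argument replaces this enumeration by a uniform descent: mutate opposite the maximum-weight edge, observe that $b\le a$ and $c<2$ give $bc<2a$ and hence a strict drop in $s$, note that the small edge survives, and iterate until the process terminates by finiteness of the set of admissible triples below the initial $s$-value.

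What each approach buys: the paper's case analysis yields an explicit and very small bound (at most three mutations) on the distance to an acyclic representative, which is useful information in its own right. Your descent argument is cleaner and avoids any casework; it also foreshadows the ``mutate opposite the maximal edge'' mechanism that the paper later isolates for mutation-cyclic diagrams in Lemma~\ref{lem:mut-cyc-non-min} and Theorem~\ref{th:minimum2}(ii). Your termination step is sound: since each weight is a positive integer and $s$ bounds every radical weight from above, only finitely many triples can occur, so a strictly decreasing sequence of $s$-values must stop.
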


\begin{proof} 
Suppose that $\Gamma=(a,b,c)$ is cyclic such that $c\leq a,b$ (so $c < 2$, thus $c=1,\sqrt{2}$ or $\sqrt{3}$). Let us also assume, without loss of generality, that $a\leq b$. Let $i$ (resp. $j$) be the vertex which is not incident to the edge with radical weight $b$ (resp. $a$).  If $c=1$, then $\mu_i(\Gamma)=(a,b-a,c)^-$ is acyclic. Let us assume now that $c=\sqrt{2}$. If $ac\leq b$, then $\mu_i(\Gamma)=(a,b-ac,c)^-$ is acyclic, otherwise $\mu_j\mu_i(\Gamma)=(bc-a,ac-b,c)^-$ is acyclic (because $bc-a>0$ for our assumption $a\leq b$ and $c=\sqrt{2}>1$). For $c=\sqrt{3}$ we use a similar argument: if $ac\leq b$, then $\mu_i(\Gamma)=(a,ac-b,c)^-$ is acyclic, otherwise either $\mu_j\mu_i(\Gamma)=(bc-2a,ac-b,c)^-$ is acyclic or (i.e. if $bc-2a<0$) $\mu_i\mu_j\mu_i(\Gamma)=(2a-bc,2b-ac,c)^-$ is acyclic (note $2b-ac>0$ because $b\geq a$ and $2>c$). This completes the proof.
%Suppose that $\Gamma=(\alpha,\beta,\gamma)$ is cyclic such that $\gamma < 4$ (so $\gamma=1,{2}$ or ${3}$).
%Then by the definition of diagram we may assume that $\beta=b \sqrt{\gamma}$, and $a=\sqrt{\alpha}$ is integer. Let $i$ be the vertex opposite to $\beta$ and $j$ opposite to $\alpha$. Note that if $a\leq b$, then $\mu_i(\Gamma)=(a,(b-a)\sqrt(\beta),\gamma)^-$ acyclic; if $b<a$, then $\mu_j(\Gamma)=(\alpha',\beta, \gamma)$ is acyclic (if $b\gamma\leq a$), otherwise it is cyclic with $\alpha'<\alpha$ so we apply induction (on $a=\sqrt{\alpha}$).
%For the second part,  suppose that $\Gamma=(a,b,c)$ with $a^2<4$ (so $a=1,\sqrt{2},\sqrt{3}$). If the conclusion of the lemma is not satisfied then we have $bc\geq 2a$, $ac\geq 2b$ and $ab\geq 2c$ (here $\mu_i(\Gamma)$ is cyclic for any vertex $i$ in $\Gamma$). Note that from the last inequality we have $b\geq 2c/a$, on the other hand, from the second inequality, we have $c\geq 2b/a$, implying $b\geq 4b/a^2$; however this is not possible because $a^2<4$. 
\end{proof}

Determining whether a given diagram is mutation-acyclic or not is a natural problem in the theory of cluster algebras and related topics. For diagrams with three vertices, a numerical criterion for being mutation-acyclic has been obtained by the author in \cite{S5}, using the notion of a \emph{quasi-Cartan companion}. For the convenience of the reader, we will recall this criterion. First let us recall that a quasi-Cartan companion of a skew-symmetrizable matrix $B$ is a symmetrizable matrix $A$ whose diagonal entries are equal to $2$ and whose off-diagonal entries differ from the corresponding entries of $B$ only by signs \cite{BGZ}. A quasi-Cartan companion $A$ of skew-symetrizable matrix $B$ is called \emph{admissible} if it satisfies the following sign condition: for any cycle $Z$ in $\Gamma(B)$, the product $\prod _{\{i,j\}\in Z}(-A_{i,j})$ over all edges of $Z$ is negative if $Z$ is oriented and positive if $Z$ is non-oriented \cite{S3}. The main examples of admissible companions are the generalized Cartan matrices: if $\Gamma(B)$ is acyclic, i.e. has no oriented cycles at all, then the quasi-Cartan companion $A$ with $A_{i,j}= -|B_{i,j}|$, for all $i\ne j$, is admissible. However, for an arbitrary skew-symmetrizable matrix $B$, an admissible quasi-Cartan companion may not exist; if exists it is unique upto simultaneous sign changes in rows and columns. For any skew-symmetrizable matrix $B$ of size $3$, an admissible quasi-Cartan companion exists and it determines whether its diagram $\Gamma(B)$ is mutation-acyclic:

\begin{theorem}\label{th:acyclic} \cite[Theorem~2.6]{S5}
Suppose that $B$ is a skew-symmetrizable matrix of size $3$ and let $A$ be an admissible quasi-Cartan companion of $B$. Then $\Gamma(B)$ is mutation-acyclic if and only if one of the following holds:
\begin{enumerate}
\item[(i)] 
$det(A)>0$ and $A$ is positive\footnote{$A$ is called (semi)positive if $DA$ is positive (semi)definite, where $D$ is a symmetrizing matrix of $A$.},
\item[(ii)] 
$det(A)=0$ and $A$ is semipositive of corank $1$,
\item[(iii)] 
$det(A)<0$.
%For the vector $u$ in part (i), the subquiver $supp_Q(u)$ has exactly two vertices or it is a cycle.
\end{enumerate}
\end{theorem}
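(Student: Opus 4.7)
The plan is to exploit the mutation-invariance of the congruence class of the admissible quasi-Cartan companion $A$: mutation of $B$ at $k$ carries $A$ to $E_{k}AE_{k}^{T}$ (possibly composed with sign changes in rows and columns) for some invertible integer matrix $E_{k}$, so $\operatorname{sign}(\det A)$ and the full signature of $A$ will be invariants of the mutation class of $\Gamma(B)$. This reduces the equivalence to verifying the trichotomy on distinguished representatives: an acyclic diagram for the forward direction, and the minimum representative from Theorem~\ref{th:minimum} for the converse.

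For the forward direction, I would assume (WLOG by invariance) that $\Gamma(B)$ itself is acyclic and pass to the symmetric form $S(A)$ of Lemma~\ref{lem:ss-conjugation}, which is the size-$3$ generalized Cartan matrix
\[
S(A)=\begin{pmatrix} 2 & -a & -b \\ -a & 2 & -c \\ -b & -c & 2 \end{pmatrix},\qquad \det S(A)=2(4-a^{2}-b^{2}-c^{2}-abc),
\]
with $a,b,c$ the radical weights. The trichotomy would then follow by direct inspection: $\det S(A)>0$ forces $a^{2}+b^{2}+c^{2}<4$, hence $a,b,c<2$, and Sylvester's criterion on the leading principal minors $(2,\,4-a^{2},\,\det S(A))$ gives positive definiteness, yielding (i); if $\det S(A)=0$ then $a^{2}+b^{2}+c^{2}=4-abc\le 4$, so the sum of $2\times 2$ principal minors is $12-(a^{2}+b^{2}+c^{2})\ge 8>0$, and combined with $\operatorname{tr}S(A)=6$ this forces the two nonzero eigenvalues positive, yielding (ii); and $\det S(A)<0$ together with positive trace is indefinite, yielding (iii).

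For the converse I would argue by contrapositive: if $\Gamma(B)$ is mutation-cyclic then $A$ satisfies none of (i)--(iii). Theorem~\ref{th:minimum} furnishes a cyclic minimum $\Gamma_{0}=(a,b,c)$ in the class; Proposition~\ref{prop:weight 4} gives $a,b,c\ge 2$, and minimality combined with Proposition~\ref{prop:mut-radical-weight}(iii) yields $2a\le bc$, $2b\le ac$, $2c\le ab$. The symmetric companion is now
\[
S(A)=\begin{pmatrix} 2 & a & c \\ a & 2 & b \\ c & b & 2 \end{pmatrix},\qquad \det S(A)=2(4-a^{2}-b^{2}-c^{2}+abc).
\]
The principal minor $4-a^{2}\le 0$ rules out positive definiteness, eliminating (i). If $\det S(A)=0$, semipositivity would force $4-a^{2}\ge 0$, hence (by symmetry) $a=b=c=2$; then $S(A)=\tfrac{1}{2}vv^{T}$ with $v=(2,2,2)^{T}$ has rank $1$ (corank $2$), eliminating (ii).

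The heart of the proof will be the inequality $\det S(A)\ge 0$, which eliminates (iii). Assuming WLOG $a\ge b\ge c\ge 2$, the derivative $\partial(\det S(A))/\partial a=2(bc-2a)\ge 0$ is non-negative on the constraint region, so the minimum of $\det S(A)$ in the $a$-direction occurs at the binding endpoint $a=2b/c$ of $2b\le ac$; substitution simplifies to
\[
4-\tfrac{4b^{2}}{c^{2}}-b^{2}-c^{2}+2b^{2}=\frac{(c^{2}-4)(b^{2}-c^{2})}{c^{2}}\ge 0
\]
since $b\ge c\ge 2$, completing the converse. The main obstacle is this last inequality: it relies on the precise form of the minimality constraints from Theorem~\ref{th:minimum} and Proposition~\ref{prop:mut-radical-weight}(iii), combined with the lower bound $a,b,c\ge 2$ from Proposition~\ref{prop:weight 4}, and its boundary analysis is what makes the trichotomy sharp.
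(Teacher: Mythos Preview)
The paper does not prove this theorem; it is quoted from \cite{S5} as a preliminary result, with only the remark that ``the main ingredient in proving the theorem is an extension of the mutation operation to quasi-Cartan companions; we refer to \cite[Section~2]{S3} for details.'' So there is no in-paper proof to compare against directly.

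Your argument is essentially correct, and it is worth noting what is the same and what is different from the cited approach. The invariance step you begin with---that mutation carries the admissible companion to a congruent matrix---is exactly the ingredient from \cite{S3} the paper alludes to, so on that point you are following the intended route (and you are right to take it as a black box here). Your forward direction is a clean direct computation on the generalized Cartan matrix and is unproblematic. Where you genuinely diverge is in the converse: instead of whatever machinery \cite{S5} uses, you invoke Theorem~\ref{th:minimum} of \emph{this} paper to pass to the minimal cyclic representative and then do an elementary optimization. This is legitimate and not circular---the proof of Theorem~\ref{th:minimum} in Section~\ref{sec:proof} relies only on Lemmas~\ref{lem:BBH-Lemma2.1c}--\ref{lem:mut-cyc-non-min} and Propositions~\ref{prop:weight-s}--\ref{prop:weight 4}, none of which appeal to Theorem~\ref{th:acyclic}. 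What your route buys is a short, purely inequality-based endgame; what it costs is the dependence on the minimality classification, which \cite{S5} presumably avoids.

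One small point of presentation in your optimization: after declaring $a\ge b\ge c\ge 2$, the endpoint $a=2b/c$ need not satisfy $a\ge b$ (indeed $2b/c\le b$ once $c\ge 2$). This does no harm---your actual $a$ lies in $[2b/c,\,bc/2]$ where $f$ is nondecreasing, so $f(a)\ge f(2b/c)$ regardless---but you should phrase it as bounding $f(a)$ below by its value at the smallest point of the constraint interval, rather than as ``the minimum occurs at $a=2b/c$'' under the ordering hypothesis.
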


\noindent
Let us note that parts (i) and (ii) occur if and only if $\Gamma(B)$ is mutation-equivalent to a Dynkin and an extended Dynkin diagram respectively \cite{BGZ,S3} (here a Dynkin diagram is an orientation of a Dynkin graph). Let us also mention that the main ingredient in proving the theorem is an extension of the mutation operation to quasi-Cartan companions; we refer to \cite[Section~2]{S3} for details.
%\noindent
%Let us remark that determinant of a quasi-Cartan companion, or Markov constant, does not determine the mutation class of a skew-symmetrizable matrix. 

The previous Theorem~\ref{th:acyclic} was obtained in \cite{S5} as a non-trivial generalization of a characterization in \cite{BBH} for \emph{skew-symmetric} matrices of size $3$, using a polynomial called the Markov constant. More explicitly, for a skew-symmetric $3\times 3$ matrix $B$ with $\Gamma(B)=(x,y,z)$, the associated Markov constant is defined as $C(B)=C(x,y,z)=x^2+y^2+z^2-xyz$. Then skew-symmetric matrices with mutation-acyclic diagrams can be characterized as follows:

%For \emph{skew-symmetric} matrices of size $3$, a particular characterization has been obtained in \cite{BBH} using a polynomial called the Markov constant. More explicitly, for a skew-symmetric $B$ with $\Gamma(B)=(x,y,z)$, the associated Markov constant is defined as $C(B)=C(x,y,z)=x^2+y^2+z^2-xyz$. 

%\begin{definition}
%\label{def:Markov}
%Suppose that $B$ is a $3\times 3$ skew-symmetric matrix whose diagram $\Gamma(B)$ is cyclic. Let $x,y,z$ be the positive entries of $B$ (so the weights of $\Gamma(B)$ are $x^2$,$y^2$ and $z^2$, see Figure~\ref{fig:diag-3x3}). We define the associated Markov constant as $C(B)=C(x,y,z)=x^2+y^2+z^2-xyz$. 
%\end{definition}
%\noindent
%Note that $C(B)$ is invariant under simultaneous permutations of rows and columns (i.e. it is invariant under permutations of the vertices in $\Gamma(B)$).

%\begin{figure} 
%$  \xymatrix{ & \circ \ar^{y^2}[dr] & \\ \circ  \ar^{x^2}[ur]  & & \circ \ar^{z^2}[ll]  }  $
%\caption{Diagram of a $3\times 3$ skew-symmetric matrix. (Note that quiver notation is used in \cite{BBH})} 
%\label{fig:diag-3x3} 
%\end{figure} 

%Let us also recall from \cite{BBH} how skew-symmetric matrices with mutation-acyclic diagrams are characterized by the Markov constant:

\begin{theorem}\cite[Theorem~1.1]{BBH} \label{th:hille} 
Suppose that $B$ is a skew-symmetric (integer) matrix such that $\Gamma(B)=(x,y,z)$ is cyclic. 
Then the following are equivalent: \\
(1) $\Gamma(B)$ is mutation-acyclic. \\
(2) The Markov constant satisfies $C(x,y,z) > 4$  or $\min\{x,y,z\} < 2$. \\
(3) The Markov constant satisfies $C(x,y,z) > 4$ or the triple $(x,y,z)$
is in the following list (where we assume $x \geq y \geq z$): \\
\hspace*{0.3cm} a) $C(x,y,z) = 0:$ $(x,y,z) = (0,0,0)$, \\
\hspace*{0.3cm} b) $C(x,y,z) = 1:$ $(x,y,z) = (1,0,0)$, \\
\hspace*{0.3cm} c) $C(x,y,z) = 2:$ $(x,y,z) = (1,1,0)$ or $(1,1,1)$, \\
\hspace*{0.3cm} d) $C(x,y,z) = 4:$ $(x,y,z) = (2,0,0)$ or $(2,1,1)$. \\
\end{theorem}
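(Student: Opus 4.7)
The plan is to derive Theorem~\ref{th:hille} from the general criterion of Theorem~\ref{th:acyclic} together with Proposition~\ref{prop:weight 4}, by making the admissible quasi-Cartan companion explicit in the skew-symmetric case. For $B$ with $\Gamma(B)=(x,y,z)$ cyclic, the admissible companion $A$ is a symmetric matrix with $2$'s on the diagonal and $|A_{ij}|$ equal to the radical weight of the edge $\{i,j\}$; admissibility for the oriented $3$-cycle is the sign condition $A_{12}A_{13}A_{23}>0$, which is invariant under simultaneous sign changes $A\mapsto D_iAD_i$, so one may normalize to
\[
A=\begin{pmatrix} 2 & x & y \\ x & 2 & z \\ y & z & 2 \end{pmatrix}.
\]
A direct expansion yields the key identity $\det(A)=2(4-C(x,y,z))$, linking the Markov constant to the criterion of Theorem~\ref{th:acyclic}.

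For the implication (2)$\Rightarrow$(1): if $C(x,y,z)>4$ then $\det(A)<0$, and Theorem~\ref{th:acyclic}(iii) yields mutation-acyclicity; if $\min\{x,y,z\}<2$, then the corresponding edge has weight at most $1<4$ (since weight equals radical-weight squared), so Proposition~\ref{prop:weight 4} applies. For the converse, suppose $C(x,y,z)\leq 4$ and $\min\{x,y,z\}\geq 2$. Then $\det(A)=2(4-C)\geq 0$ rules out Theorem~\ref{th:acyclic}(iii). The $2\times 2$ principal minors of $A$ are $4-x^{2}$, $4-y^{2}$, $4-z^{2}$, each $\leq 0$ by assumption, so $A$ is not positive definite and case (i) is excluded. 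For case (ii) one would need $C=4$ and every $2\times 2$ minor $\geq 0$, forcing $x=y=z=2$; but then $A=2J$, where $J$ is the all-ones matrix, has rank $1$, i.e.\ corank $2$, contradicting the corank-$1$ requirement. Thus $\Gamma(B)$ is not mutation-acyclic, completing (1)$\iff$(2).

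Finally, (2)$\iff$(3) follows by a finite enumeration of non-negative integer triples $x\geq y\geq z$ with $z\leq 1$ and $C(x,y,z)\leq 4$: the subcase $z=0$ produces $(0,0,0),(1,0,0),(1,1,0),(2,0,0)$, while $z=1$ gives $x^{2}+y^{2}-xy+1\leq 4$, which forces $y=1$ and $x\leq 2$, yielding $(1,1,1)$ and $(2,1,1)$; together these match sublists (a)--(d) exactly. The principal obstacle in the argument above is the borderline case $C=4$ with $\min\{x,y,z\}\geq 2$, where the determinant alone is inconclusive and one must rule out the semipositive corank-$1$ possibility by the explicit rank analysis of $2J$.
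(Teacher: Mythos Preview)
Your argument is correct. Note, however, that the paper does not give its own proof of Theorem~\ref{th:hille}: it is quoted from \cite{BBH}, and the paper's contribution is precisely the observation you exploit, namely that for an admissible quasi-Cartan companion $A$ of a skew-symmetric $3\times 3$ matrix one has $\det A = 2(4-C(B))$, which reduces Theorem~\ref{th:hille} to the more general Theorem~\ref{th:acyclic}. Your derivation is exactly this reduction made explicit, supplemented by Proposition~\ref{prop:weight 4} for the $\min\{x,y,z\}<2$ branch and by the principal-minor analysis to exclude cases (i) and (ii) of Theorem~\ref{th:acyclic} when $C\le 4$ and $\min\{x,y,z\}\ge 2$.

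Two minor remarks. First, in the step ``$\min\{x,y,z\}<2$ implies an edge of weight $<4$'' you are implicitly using that $x,y,z$ are integers (since $B$ is a skew-symmetric integer matrix), so $\min<2$ forces $\min\in\{0,1\}$; if $\min=0$ the diagram is already acyclic, and if $\min=1$ the corresponding weight is $1<4$ so Proposition~\ref{prop:weight 4} applies. Second, the hypothesis ``$\Gamma(B)=(x,y,z)$ is cyclic'' sits somewhat awkwardly with the triples in list (3) that contain zeros; this is an artifact of how the result is quoted, not a gap in your argument, and the equivalence $(2)\Leftrightarrow(3)$ is in any case the elementary enumeration you give.
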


\noindent
Let us note that a generalization of this theorem to skew-symmetrizable matrices is not immediate because the Markov constant is not defined for non-skew-symmetric matrices; it is also not defined for skew-symmetric matrices whose diagrams are acyclic. It was observed in \cite{S5} that, for a skew-symmetric matrix $B$ of size $3$ and an admissible quasi-Cartan companion $A$ of $B$, we have $detA=2(4-C(B))$, leading to Theorem~\ref{th:acyclic}. For skew-symmetrizable matrices of arbitrary size, it seems that one needs to consider the admissible quasi-Cartan companion itself rather than just its determinant, see \cite{S3} for a conjecture. 
%The Markov constant appeared earlier in the literature, particularly on vector bundles, see e.g. \cite{Rudakov}. The relation between these contexts has not been particularly studied.

\section{Proofs of Main Results}
\label{sec:proof}
First we will prove some lemmas that we use to prove our theorems. In the proofs we assume, without loss of generality, that all diagrams are connected. The following statement generalizes \cite[Lemma~2.1c]{BBH}.

%Let us also note that we will usually consider equality of diagrams upto an enumeration of vertices.???
%\begin{theorem}\label{th:minimum}
%Suppose that $B$ is a skew-symmetrizable matrix of size $3$. Then the following holds:
%Suppose that $M$ is a mutation class of diagrams with $3$ vertices. For any $\Gamma$ in $M$, let $s(\Gamma)$ denote the sum of (square roots of) the weights of its edges. Then $M$ contains a diagram $\Gamma_0$ such that $s(\Gamma_0)$ is minimal. The diagram $\Gamma_0$ is unique if it is mutation-cyclic; otherwise it is acyclic and unique up to a change of orientation.

%Furthermore, for each $\Gamma$ in $M$, there is a sequence $\mu_i$ of mutations such that $\Gamma_0=\mu_1...\mu_n(\Gamma)$ and for $\Gamma_i=\mu_i...\mu_1(\Gamma)$ we have $s(\Gamma_i)<s(\Gamma_{i+1})$. 

%\end{theorem}

%RESTATE:
%\begin{theorem}\label{th:minimum}
%Suppose that $B$ is a skew-symmetrizable matrix of size $3$. Then the following holds:
%LEMMA: If  $\Gamma$ is mutation cyclic and $s(\Gamma)$ is not minimal then there is uniqu edge with maximal weight.????

%LEMMA: Suppose that $\Gamma=(a,b,c)$ is cyclic with $a<c$. Let $i$ be the vertex opposite to $c$ and $j$ the vertex opposite to $a$. 
%Suppose also that each $\mu_i(\Gamma)$ and $\mu_j(\Gamma)$ are cyclic.
%Then $s(\mu_i(\Gamma))< s(\mu_j(\Gamma))$.

%Proof. Note that $s(\mu_i(\Gamma))=a+b+(ab-c)$ and $s(\mu_j(\Gamma))=c+b+(bc-a)$. It canbe checked easily that the claimed inequality is satisfied.

\begin{lemma}\label{lem:BBH-Lemma2.1c}
Suppose that $\Gamma$ is a three-vertex diagram. If there are vertices $i\ne j$ such that $s(\mu_i(\Gamma)) < s(\Gamma)$ and $s(\mu_j(\Gamma))\leq s(\Gamma)$, then $\Gamma$ has an edge whose weight is less than $4$ and it is mutation-acyclic.

\end{lemma}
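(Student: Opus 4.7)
The plan is to reduce to the cyclic case, translate the two mutation conditions into quantitative inequalities on the radical weights via Proposition~\ref{prop:mut-radical-weight}, and then invoke Proposition~\ref{prop:weight 4}.

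First I would argue that $\Gamma$ must be cyclic. If $\Gamma$ were acyclic, Proposition~\ref{prop:mut-radical-weight}(i)--(ii) would give that $\mu_i(\Gamma)$ is either acyclic with the same radical weights (when $i$ is a source or sink) or a cyclic diagram of the form $(\alpha,\beta,\gamma+\alpha\beta)$ with strictly larger $s$. In either case $s$ cannot strictly decrease, contradicting $s(\mu_i(\Gamma))<s(\Gamma)$. Hence $\Gamma=(\alpha,\beta,\gamma)$ is cyclic with $\alpha,\beta,\gamma>0$.

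Next I would label the radical weights so that $\alpha$ is the weight of the edge opposite vertex $i$, $\beta$ the weight of the edge opposite $j$, and $\gamma$ the weight of the remaining edge (opposite the third vertex). By Proposition~\ref{prop:mut-radical-weight}(iii), the only radical weight changed by $\mu_i$ is the one opposite $i$, which becomes $|\beta\gamma-\alpha|$; analogously the weight opposite $j$ becomes $|\alpha\gamma-\beta|$ in $\mu_j(\Gamma)$. Therefore the two hypotheses translate as
\begin{align*}
s(\mu_i(\Gamma)) < s(\Gamma) &\iff |\beta\gamma-\alpha|<\alpha \iff \beta\gamma < 2\alpha,\\
s(\mu_j(\Gamma)) \leq s(\Gamma) &\iff |\alpha\gamma-\beta|\leq\beta \iff \alpha\gamma \leq 2\beta,
\end{align*}
where in the equality case of the second line Proposition~\ref{prop:weight-s}(i) and the cyclicity of $\Gamma$ force $\alpha\gamma=2\beta$ (the alternative $\alpha\gamma=0$ is ruled out because all three radical weights are positive).

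Multiplying these two inequalities yields $\alpha\beta\gamma^2 < 4\alpha\beta$, and cancelling the positive factor $\alpha\beta$ gives $\gamma^2<4$. Hence $\Gamma$ contains an edge of weight $\gamma^2<4$, and Proposition~\ref{prop:weight 4} immediately gives that $\Gamma$ is mutation-acyclic. The only delicate point I anticipate is the equality subcase $s(\mu_j(\Gamma))=s(\Gamma)$: one must use Proposition~\ref{prop:weight-s}(i) to extract the sharp identity $\alpha\gamma=2\beta$ rather than just an inequality, so that combining it with the \emph{strict} inequality $\beta\gamma<2\alpha$ still yields the strict bound $\gamma^2<4$ that Proposition~\ref{prop:weight 4} requires.
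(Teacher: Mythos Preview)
Your argument is correct and follows essentially the same route as the paper's proof: rule out the acyclic case, translate the two hypotheses into the inequalities $\beta\gamma<2\alpha$ and $\alpha\gamma\le 2\beta$, multiply to obtain $\gamma^{2}<4$, and finish with Proposition~\ref{prop:weight 4}. Your use of $|\beta\gamma-\alpha|$ neatly unifies the paper's separate ``$\mu_i(\Gamma)$ cyclic'' / ``$\mu_i(\Gamma)$ acyclic'' cases, and your worry about the equality subcase is unnecessary: since one inequality is strict and all quantities are positive, the product $\alpha\beta\gamma^{2}<4\alpha\beta$ is already strict without any extra appeal to Proposition~\ref{prop:weight-s}.
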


\begin{proof}
 Let us first note that $\Gamma$ is not acyclic (otherwise for any vertex $k$, we have $s(\mu_k(\Gamma))\geq s(\Gamma)$ ), so we suppose that $\Gamma=(a,b,c)$ is  cyclic. We assume that $i$ is not incident to the edge with radical weight $a$ and $j$ is not incident to the edge with radical weight $b$. Let us first assume that $\mu_i(\Gamma)$ and $\mu_j(\Gamma)$ are both cyclic.  Then $\mu_i(\Gamma)=(-a+bc,b,c)$ and $\mu_j(\Gamma)=(a,-b+ac,c)$. The conditions of the lemma imply that $bc < 2a$ and $ac\leq 2b$ (Proposition~\ref{prop:weight-s}(ii)). Multiplying the first inequality by $c$, we have $bc^2 < 2ac$; on the other hand, by the second inequality, we have $2ac\leq 4b$, implying that $c^2<4$. Then by Proposition~\ref{prop:weight 4}, the diagram $\Gamma$ is mutation-acyclic.

%Let us first note that $\Gamma$ is not acyclic (otherwise for any vertex $k$, we have $s(\mu_k(\Gamma))\geq s(\Gamma)$ ), so we suppose that $\Gamma=(alpha,beta,gamma)$ is  cyclic. We assume that $i$ is opposite to $\alpha$ and $j$ is opposite to $\beta$. Also assume first that $\mu_i(\Gamma)$ and $\mu_j(\Gamma)$ are both cyclic.  Then $\mu_i(\Gamma)=-\alpha+\beta\gamma,\beta,\gamma$ and $\mu_j(\Gamma)=\alpha,-\beta+\alpha\gamma,\gamma$. The conditions of the lemma imply that $\beta\gamma < 2\alpha$ and $\alpha\gamma\leq 2\beta$. Multiplying the first inequality by $\gamma$, we have $\beta\gamma^2 < 2\alpha\gamma$; on the other hand, by the second inequality, we have $2\alpha\gamma\leq 4\beta$, so $\gamma^2<4$ so $\gamma < 2(=4)$. Then by Proposition~\ref{prop:weight 4}, the diagram $\Gamma$ is mutation-acyclic.

Let us now assume that one of $\mu_i(\Gamma)$ or $\mu_j(\Gamma)$ is acyclic; without loss of generality, suppose that $\mu_i(\Gamma)$ is acyclic. Then $a\geq bc$. If $\mu_j(\Gamma)$ is not acyclic, then by the condition of the lemma we have $ac\leq 2b$. Then $ac^2\leq 2bc\leq 2a$, implying that $c^2\leq 2$. Similarly if $\mu_j(\Gamma)$ is acyclic, then $ac \leq b$, implying $a\leq ac^2\leq bc\leq a$, so $c=1$ (and $a=b$). In any case, by Proposition~\ref{prop:weight 4}, the diagram $\Gamma$ is mutation-acyclic. This completes the proof.
\end{proof}
%and $\mu_i(\Gamma)=(a-bc,b,c)$...

In view of the previos lemma, the following is a special case of Theorem~\ref{th:minimum}(ii):

\begin{lemma}\label{lem:weight 4-minimal}

Suppose $\Gamma$ be a three-vertex diagram which has an edge whose weight is less than $4$. If $\Gamma$ is cyclic, then it has a vertex $k$ such that $s(\Gamma)>s(\mu_k(\Gamma))$; in particular, $s(\Gamma)$ is not minimal.

\end{lemma}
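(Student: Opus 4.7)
The plan is to exhibit an explicit mutation that decreases $s$. After relabeling, assume the edge of weight less than $4$ has radical weight $c$, so $c<2$ (i.e.\ $c\in\{1,\sqrt{2},\sqrt{3}\}$), and label the radical weights of the remaining two edges $a\le b$. Let $j$ be the vertex not incident to the $b$-edge. Proposition~\ref{prop:mut-radical-weight}(iii), applied with the role of the distinguished edge played by $b$ rather than $c$, tells us that $\mu_j(\Gamma)$ is obtained from $\Gamma$ by replacing $b$ with $|ac-b|$, while leaving $a$ and $c$ intact; whether $\mu_j(\Gamma)$ is cyclic or acyclic depends on the sign of $ac-b$, but the new radical weight has the same absolute value in both cases.

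The only numerical step is to verify that $|ac-b|<b$, which amounts to $0<ac<2b$. Positivity is automatic, since all three edges of the cycle are present and so $a,c>0$. The upper bound follows from the two-term chain $ac\le bc<2b$, where the first inequality uses $a\le b$ and the second uses $c<2$. Thus the edge not incident to $j$ has strictly smaller weight in $\mu_j(\Gamma)$ than in $\Gamma$, while the other two edges keep their weights, so Proposition~\ref{prop:weight-s}(ii) gives $s(\mu_j(\Gamma))<s(\Gamma)$. This shows that $\Gamma$ admits a weight-decreasing mutation, and in particular $s(\Gamma)$ is not minimal.

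The only conceptual choice in the proof is which vertex to mutate at, and this is the main obstacle to avoid. The seemingly natural move---mutating at the vertex opposite the small edge $c$---sends $c\mapsto|ab-c|$, which is not forced to be smaller than $c$ when $ab$ is large, so it does not always work. Mutating instead at the vertex opposite the \emph{largest} edge is the correct move, because the small weight $c<2$ is then used multiplicatively via $ac\le bc<2b$ to shrink the large weight $b$. This also matches the strategy already used in the proof of Proposition~\ref{prop:weight 4}, where $\mu_j\mu_i$ (resp.\ $\mu_i\mu_j\mu_i$) is applied to bring a cyclic diagram with $c<2$ to an acyclic one; here we just need the first step of such a sequence to decrease $s$.
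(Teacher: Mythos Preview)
Your proof is correct. It differs from the paper's argument in that you give a direct, constructive proof---choosing $j$ opposite the largest edge and showing $|ac-b|<b$ via the chain $ac\le bc<2b$---whereas the paper argues by contradiction: assuming $s(\mu_i(\Gamma))\ge s(\Gamma)$ for every vertex $i$ yields the three inequalities $bc\ge 2a$, $ac\ge 2b$, $ab\ge 2c$ (with $a$ now denoting the small radical weight), and combining the last two gives $b\ge 4b/a^2$, contradicting $a^2<4$. Your approach has the advantage of naming the specific vertex that works, which dovetails with the uniqueness statement in Lemma~\ref{lem:mut-cyc-non-min} and with Theorem~\ref{th:minimum2}(ii); the paper's approach is slightly more symmetric in that no ordering of the weights needs to be fixed. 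Both are one-line arguments once set up, so the difference is mainly one of style.
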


\begin{proof}
 Suppose that $\Gamma=(a,b,c)$ with $a^2<4$ (so $a=1,\sqrt{2},\sqrt{3}$). If the conclusion of the lemma is not satisfied then we have $bc\geq 2a$, $ac\geq 2b$ and $ab\geq 2c$ (here $\mu_i(\Gamma)$ is cyclic for any vertex $i$ in $\Gamma$). Note that from the last inequality we have $b\geq 2c/a$, on the other hand, from the second inequality, we have $c\geq 2b/a$, implying $b\geq 4b/a^2$; however this is not possible because $a^2<4$. This completes the proof.
\end{proof}

Let us now show the following special case of Theorem~\ref{th:minimum2}(ii):

\begin{lemma}\label{lem:mut-cyc-non-min}

Suppose that $\Gamma$ is mutation-cyclic and has a vertex $i$ such that $s(\mu_i(\Gamma)) < s(\Gamma)$ (in particular $s(\Gamma)$ is not minimal). Then $i$ is the unique vertex with this property. Furthermore $\Gamma$ has a unique edge $e$ with maximal weight: the vertex $i$ is the vertex which is not incident to $e$.

\end{lemma}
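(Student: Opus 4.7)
The plan is to translate the hypothesis into concrete inequalities on the radical weights of $\Gamma$ and then derive contradictions with Proposition~\ref{prop:weight 4} whenever the uniqueness assertions fail.

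First I would observe that since $\Gamma$ is mutation-cyclic, every diagram in its mutation class is cyclic---otherwise the class would contain an acyclic diagram. In particular $\Gamma$ itself and $\mu_i(\Gamma)$ are both cyclic. Write $\Gamma=(a,b,c)$ in the notation of Definition~\ref{def:(a,b,c)}, and let $i_a$, $i_b$, $i_c$ denote the vertices opposite the edges with radical weights $a$, $b$, $c$ respectively. Applying Proposition~\ref{prop:mut-radical-weight}(iii) at $i_a$: because $\mu_{i_a}(\Gamma)$ is cyclic we must be in the case $a<bc$, and the new radical weight of the edge opposite $i_a$ is then $bc-a$. By Proposition~\ref{prop:weight-s}(ii), the hypothesis $s(\mu_{i_a}(\Gamma))<s(\Gamma)$ is equivalent to $bc-a<a$. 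Hence $i_a$ witnesses the hypothesis of the lemma if and only if
\[
  \tfrac{bc}{2} < a < bc,
\]
and analogous equivalences hold at $i_b$ and $i_c$.

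For the uniqueness of $i$, I would suppose towards a contradiction that two of these vertices, say $i_a$ and $i_b$, both satisfied the corresponding inequalities. Then $a>bc/2$ and $b>ac/2$; multiplying these gives $ab>abc^{2}/4$, so $c^{2}<4$. But then the edge of weight $c^{2}<4$ would, by Proposition~\ref{prop:weight 4}, force $\Gamma$ to be mutation-acyclic, contradicting the hypothesis. Hence at most one vertex $i$ can satisfy $s(\mu_i(\Gamma))<s(\Gamma)$.

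For the maximality of the edge opposite $i$, assume (after relabeling) that $i=i_a$, so that $bc/2<a<bc$. Suppose towards a contradiction that $a\leq b$. Then $b\geq a>bc/2$, giving $c<2$ and hence $c^{2}<4$; Proposition~\ref{prop:weight 4} again forces $\Gamma$ to be mutation-acyclic, a contradiction. Thus $a>b$, and the symmetric argument using $a\leq c\Rightarrow b<2$ yields $a>c$. Therefore the edge opposite $i$ is the strictly unique edge of maximal weight in $\Gamma$. The only real bookkeeping point is to ensure that $\mu_i(\Gamma)$ stays cyclic so that Proposition~\ref{prop:mut-radical-weight}(iii) gives the formula $bc-a$ rather than $a-bc$; this is immediate from mutation-cyclicity being a class invariant, so no serious obstacle arises.
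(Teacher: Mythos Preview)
Your proof is correct and follows the same overall strategy as the paper: translate the mutation conditions into inequalities on the radical weights and reach a contradiction with Proposition~\ref{prop:weight 4}. For the uniqueness of $i$, the paper simply invokes Lemma~\ref{lem:BBH-Lemma2.1c}, whose proof is essentially the multiplication argument you carry out inline. For the maximality of the opposite edge your route diverges slightly from the paper's. The paper first uses the uniqueness of $i$ to obtain the complementary inequalities at the other two vertices (in its labeling, $bc\geq 2a$ and $ac\geq 2b$ alongside $ab<2c$) and chains them together, e.g.\ $abc\geq 2a^{2}$ with $2c^{2}>abc$ to get $c>a$. You instead use only the single inequality $a>bc/2$ coming from $i$ itself and compare $a$ directly with $b$ and with $c$. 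Your argument is a touch more economical in that the maximality claim does not rely on the uniqueness part already proved; the paper's argument makes the role of all three mutation inequalities explicit. Either way the content is the same.
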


\noindent
Note that the statement may not be true if $\Gamma$ is mutation-acyclic (e.g. for $\Gamma=(1,3,5)$).

 %We prove both parts together. Since $s(\Gamma)$ is not minimal there is a vertex $i$ such that $s(\mu_i(\Gamma)) < s(\Gamma)$. 
\begin{proof}
%Let $j,k$ be the remaining vertices. 
%Since $\Gamma$ is mutation-cyclic, by Lemma\ref{lem:BBH-Lemma2.1c}, we have $s(\mu_j(\Gamma)), s(\mu_k(\Gamma)) > s(\Gamma)$, which proves the first part of the statement. 
The first part of the statement follows from Lemma~\ref{lem:BBH-Lemma2.1c}. For the second part, suppose $\Gamma=(a,b,c)$ and assume that $i$ is the vertex which is not incident to the edge with radical weight $c$. Then, by the first part, $ba<2c$ but $bc \geq 2a$ and $ac\geq 2b$. Multiplying the second inequality by $a$ we have $abc \geq 2a^2$;  since $2c^2>abc$, we have $2c^2>2a^2$, thus $c>a$. Similarly $c>b$. This completes the proof.
\end{proof}

\subsection{Proof of Theorems~\ref{th:minimum} and ~\ref{th:minimum2}}
\label{subsec:proof-minimum2}
We will first prove Theorem~\ref{th:minimum} and Theorem~\ref{th:minimum2}(i) both at the same time. For this, let us first note that the mutation class of $\Gamma$ obviously contains a diagram $\Gamma_0$ such that $s(\Gamma_0)$ is minimal. To show its uniqueness, note that if $s(\Gamma)$ is minimal, then it satisfies:

(*) $s(\Gamma)\leq s(\mu_i(\Gamma))$ for any vertex $i$. 

\noindent
We will show that, in the mutation class of $\Gamma$, there is a unique, upto a change of orientation as described in the statement of Theorem~\ref{th:minimum}, diagram $\Gamma_0$ that satisfies (*). (This will prove Theorem~\ref{th:minimum} and Theorem~\ref{th:minimum2}(i)). 
For this purpose, %In other words, we first prove part (iii).???
let us suppose that $\Gamma'_0$ is another diagram that satisfies (*) in the mutation class of $\Gamma$; say $\Gamma'_0=\mu_n...\mu_1(\Gamma_0)$. We may assume without loss of generality that $n$ is minimal (in particular $i\ne i+1$, i.e. a mutation is not applied consecutively). If $n=1$, then, the effect of $\mu_1$ on $\Gamma_0$ is to reverse all edges (because $\Gamma_0$ and $\Gamma'_0=\mu_1(\Gamma_0)$ have the same weights as they satisfy (*)), so $\Gamma_0$ and $\Gamma'_0$ are equal as claimed in Theorem~\ref{th:minimum}. Thus for the rest of the proof, we can assume $n\geq 2$. 

To proceed, let us denote $\Gamma_l=\mu_l...\mu_1(\Gamma_0)$, $l=1,2,...,n$, with $\Gamma_n=\Gamma'_0$. Note that, since $n$ is minimal, $\Gamma_i$ $i=1,2,...,n-1$, does not satisfy (*)
; also we have
\begin{align} 
\label{eq:(**)}
&\text{$s(\Gamma_0)<s(\Gamma_1)$ and $s(\Gamma'_{0})< s(\Gamma_{n-1})$ }
\end{align}
%(**)$s(\Gamma_0)<s(\Gamma_1)$ and $s(\Gamma'_{0})< s(\Gamma_{n-1})$ 
(otherwise $\Gamma_1$ or $\Gamma_{n-1}$ satisfies (*) by Proposition~\ref{prop:weight-s}, contradicting the minimality of $n$; note that $\Gamma_{n-1}=\mu_n(\Gamma'_0)$). Let us also note that there exists $1\leq m \leq n-1$ such that $s(\Gamma_m)$ is maximum, i.e. $s(\Gamma_i)\leq s(\Gamma_m)$ for $i=0,1,...,n$. By \eqref{eq:(**)}, we can assume that $s(\Gamma_{m-1})\leq s(\Gamma_{m})>s(\Gamma_{m+1})$ (note that $\Gamma_{m-1}=\mu_{m}(\Gamma_{m})$ and $\Gamma_{m+1}=\mu_{m+1}(\Gamma_{m})$). Then, by Lemma~\ref{lem:BBH-Lemma2.1c}, the diagram $\Gamma$ is mutation-acyclic, so we are done if $\Gamma$ is mutation-cyclic. 

For the rest of the proof we assume that $\Gamma$ is mutation-acyclic. Note then that, by Lemma~\ref{lem:weight 4-minimal}, the diagrams $\Gamma_0$ and $\Gamma'_0$ are acyclic. Also note that $\Gamma_i$, $1\leq i \leq n-1$, is not acyclic because any acyclic diagram satisfies (*)
; in particular, $\Gamma_1$ is obtained from $\Gamma_0$ by mutating at the vertex which is not a source nor a sink (similarly the vertex $n$ is neither a source nor a sink in $\Gamma'_0=\Gamma_n$). Let us also note that, by Lemma~\ref{lem:BBH-Lemma2.1c}, the diagram $\Gamma_m$ has an edge whose weight is less than $4$, then it follows from Proposition~\ref{prop:weight-s} that 
\begin{align} 
\label{eq:(***)}
&\text{the diagram $\Gamma_l$ has an edge whose weight is less than $4$, for $l=0,1,2,...,n$.}
\end{align}
\credit{Case 1} \emph{$\Gamma_0$ is skew-symmetric, so has an edge of weight one in $\Gamma_0$.} Let us note that this case is known by a result of Caldero-Keller \cite[Conjecture~4.14 (4)]{FZ-CDM03}. We include a proof here to illustrate our method (which is very different from the one in \cite{FZ-CDM03}). 

For convenience, we first consider the case where $\Gamma_0$ is a tree, say $\Gamma_0=(a,1,0)^-$. Then $\Gamma_1=\mu_1(\Gamma_0)=(a,a,1)$ (recall that the vertex $1$ is neither a source nor a sink in $\Gamma_0$ because $s(\Gamma_0)<s(\Gamma_1)$). Then the diagram $\Gamma_2$ is obtained from $\Gamma_1$ by mutating at a vertex $k$ which is different from the vertex $1$. For such a vertex $k\ne 1$, we have the following: either $\Gamma_2=\mu_k(\Gamma_1)$ is equal to $\Gamma_0$ upto an enumeration of vertices, so the conclusion of the theorem holds, or $\Gamma_2=\mu_k(\Gamma_1)=(a,a,a^2-1)$. 
Let us note that, in the case where $\Gamma_2=\mu_k(\Gamma_1)=(a,a,a^2-1)$, we have the following: if $a>1$, then $\Gamma_2=\mu_k(\Gamma_1)$ does not have any edge whose weight is less than $4$, contradicting \eqref{eq:(***)} (here $a\geq 2$ because $a$ is an integer in this case); if $a=1$, then $\Gamma_2=\mu_k(\Gamma_1)$ is equal to $\Gamma_0$ upto an enumeration of vertices as claimed.

%Then the mutation at any vertex $k\ne 1$ gives a tree which is equal to $\Gamma_0$ upto an enumeration of vertices, so the conclusion of the theorem holds.  

Suppose now that $\Gamma_0$ is not a tree, say $\Gamma_0=(1,a,b)^-$. Note that if the vertex $1$ is not incident to any edge whose weight is $1$, then $\Gamma_1=\mu_1(\Gamma_0)=(a,b,1+ab)$ with $a,b\geq 2$; contradicting our assumption that $\Gamma_1$ has an edge whose weight is less than $4$  \eqref{eq:(***)}.
Thus, in $\Gamma_0$, we can assume that the vertex $1$ is incident to an edge of weight $1$, the remaining radical weights are $a,b$ (so $\Gamma_0$ has a source or sink whose incident edges have radical weights $a,b$). 

%It follows from a direct check that, for $i=1,...,n-1$, 
%$\Gamma_i=(1,b,a+b)$ or $\Gamma_i=(1,a,a+b)$. Then it can be checked easily that $\Gamma'_0$ has the same weights as $\Gamma_0$ and they can be obtained from each other with an enumeration of vertices and/or a 
%(???sequence of??? single reflection is enough) 
%reflection at a source or sink. 

To proceed, let us first consider the subcase where $a$ or $b$ is equal to one; without loss of generality, say $a=1$. Then it can be checked easily that, for $i=1,...,n-1$, %(here, in fact, $n=3$), 
we have $\Gamma_i=(1,1,a+1)$ or $\Gamma_i=(1,a,a+1)$ (so $\Gamma_n=\Gamma'_0=(1,1,a)^-$) and $\Gamma_0$ is equal to $\Gamma'_0$ upto an enumeration of vertices possibly after a reflection at a source or sink. 

Let us now assume that $a$ and $b$ are greater than one. We may assume without loss of generality that (in $\Gamma_0$) the vertex $1$ is not incident to the edge with radical weight $a$ (otherwise it is incident to the edge with radical weight $b$, then we exchange the letters $a$ and $b$).
Let us also assume that $\Gamma_0=(1,a,b)^-$ such that the edge 
$2 \longrightarrow 1$ has radical weight $1$, the edge $1\longrightarrow 3$ has radical weight $b$ and the edge $2 \longrightarrow 3$ has radical weight $a$.
Then $\Gamma_1=\mu_1(\Gamma_0)=(1,a+b,b)$. %Let us denote by $e$ the edge with weight one (the remaining radical weights are $a+b,b$). 
Now, the diagram $\Gamma_2$ is obtained by mutating $\Gamma_1$ at a vertex $v\ne 1$ such that $v$ is incident to an edge $e$ with weight one (otherwise $\mu_v(\Gamma_1)$ does not have any edge whose weight is less than $4$); so $e$ is the edge $1 \longrightarrow 2$ in $\Gamma_1$, thus $v=2$.
%(if $b=1$ and $v$ is incident to the edge $3\longrightarrow 1$ with radical weight $b$, then we may exchange the vertices $2,3$ by renaming). 
Then $\Gamma_2=\mu_2(\Gamma_1)=(1,a+b,a)$. Similarly the diagram $\Gamma_3$ is obtained by mutating $\Gamma_2$ at a vertex $w \ne 2$ such that $w$ is incident to an edge with weight one, then $w=1$ (because the vertex $3$ is incident to the edges with radical weights $a+b,a$, which are greater than or equal to $2$). Then $\Gamma_3=\mu_1(\Gamma_1)=(1,b,a)^-$ where the edge $1 \longrightarrow 2$ has radical weight $1$, the edge $3\longrightarrow 1$ has radical weight $a$ and the edge $3 \longrightarrow 2$ has radical weight $b$. Thus $\Gamma'_0=\Gamma_3=(1,b,a)^-$. Note then that $\Gamma'_0$ can be obtained from $\Gamma_0$ by first applying the reflection at the vertex $3$ then exchanging (renumbering) the vertices $1$ and $2$. This completes the case.

%To proceed, let us also assume that (in $\Gamma_0$) the vertex $1$ is not incident to the edge with radical weight $a$ (the case where the vertex $1$ is not incident to the edge with radical weight $b$ is considered similarly). 

%???We may assume wlog that $a=1$. Then it follows from a direct check that, for $i=1,...,n-1$, $\Gamma_i=(1,b,1+b)$ or $\Gamma_i=(1,1,1+b)$. Mutating at a vertex ($n$) such that $\Gamma'_0=\mu_n(\Gamma_{n-1})$ is acyclic, it can be checked easily that $\Gamma'_0$ has the same weights as $\Gamma_0$ and the conlusions hold.
%$\Gamma'_0=\mu_n(\Gamma_{n-1})=(a,b,1+ab)$
%\vfil

%\noindent
\credit{Case 2} \emph{$\Gamma_0$ is not skew-symmetric and has an edge of weight one in $\Gamma_0$}. Let us denote this edge by $e$. 
As in the previous case, let us first suppose that $\Gamma_0=(a,1,0)^-$ is a tree. Then $\Gamma_1=\mu_1(\Gamma_0)=(a,a,1)$ (recall that $1$ is the vertex which is neither a source nor a sink in $\Gamma_0$). For convenience, let us first assume that $a\geq 2$. Then, 
for any vertex $k\ne 1$, either $\Gamma_2=\mu_k(\Gamma_1)$ is equal to $\Gamma_0$ upto an enumeration of vertices, so the conclusion of the theorem holds, or $\Gamma_2=\mu_k(\Gamma_1)=(a,a,a^2-1)$, which does not have any edge whose weight is less than $4$, contradicting \eqref{eq:(***)}. Let us now assume that $a<2$, so $a=\sqrt{2}$ or $a=\sqrt{3}$. If $a=\sqrt{2}$, then for any vertex $k\ne 1$ either $\Gamma_2=\mu_k(\Gamma_1)$ is equal to $\Gamma_0$ upto an enumeration of vertices (so the conclusion of the theorem holds) or $\Gamma_2=\mu_k(\Gamma_1)=(\sqrt{2},\sqrt{2},1)$. Similarly, in the case where $\Gamma_2=\mu_k(\Gamma_1)=(\sqrt{2},\sqrt{2},1)$, we have $\Gamma_3=\mu_j(\Gamma_2)$ for some $j\ne k$; furthermore $\Gamma_3$ is equal to $\Gamma_0$ or it is equal to $\Gamma_2$ upto an enumeration of vertices. Continuing by induction, we have the following conclusion (when $a=\sqrt{2}$): for all $i=1,...n-1$, $\Gamma_i=(\sqrt{2},\sqrt{2},1)$ and $\Gamma'_0$ is equal to $\Gamma_0$ upto an enumeration of vertices. 
If $a=\sqrt{3}$, by a similar argument, we have the following: for all $i=1,...n-1$, $\Gamma_i=(\sqrt{3},\sqrt{3},1)$ or $\Gamma_i=(\sqrt{3},\sqrt{3},2)$, and $\Gamma'_0$ is equal to $\Gamma_0$ upto an enumeration of vertices. 

%Then the diagram $\Gamma_2$ is obtained from $\Gamma_1$ by mutating at a vertex $k$ which is different from the vertex $1$. We note that, for any vertex $k\ne 1$ either $\Gamma_2=\mu_k(\Gamma_1)$ is equal to $\Gamma_0$ upto an enumeration of vertices, so the conclusion of the theorem holds; or $\Gamma_2=\mu_k(\Gamma_1)=(a,a,a^2-1)$. 

%Then mutation at any vertex $k\ne 1$ gives a tree with the same weights, so the uniqueness conclusion of the theorems holds.  

%Then the diagram $\Gamma_2$ is obtained from $\Gamma_1$ by mutating at a vertex $k$ which is different from the vertex $1$. Let us now note that, for any vertex $k\ne 1$, we have the following: either $\Gamma_2=\mu_k(\Gamma_1)$ is equal to $\Gamma_0$ upto an enumeration of vertices, so the conclusion of the theorem holds; or $\Gamma_2=\mu_k(\Gamma_1)=(a,a,a^2-1)$. 
%Let us note that, in the case where $\Gamma_2=\mu_k(\Gamma_1)=(a,a,a^2-1)$, we have the following: if $a>1$, then $\Gamma_2=\mu_k(\Gamma_1)$ does not have any edge whose weight is less than $4$, contradicting our assumption \eqref{eq:(***)} (here $a\geq 2$ because $a$ is an integer in this case); if $a=1$, then $\Gamma_2=\mu_k(\Gamma_1)$ is equal to $\Gamma_0$ upto an enumeration of vertices as claimed.

Let us now suppose that $\Gamma_0=(1,a,b)^-$ is not a tree. Note that since $\Gamma_0$ is not skew-symmetric, the numbers $a,b$ are not integers (but square-roots of integers). For convenience, we consider in subcases:

%Let $k(=1)$ is the vertex which is neither source nor sink in $\Gamma_0$. (Assume first that $k$ is not incident to $e$ NOT NECESSARY!!!  below may need to assume that $a=\sqrt{2}$ or $a=\sqrt{3}$, which is done, but need to do remaining possibility $a,b\geq 4$ seperately). 

%Let $a,b$ be the weights of the remaining edges, where $a,b\geq \sqrt{2}$ (otherwise finite type)
%\noindent
\credit{Subcase 2.1} \emph{$a$ or $b$ is equal to $\sqrt{2}$.}
%Let $a,b$ be the weights of the remaining edges, where $a,b\geq \sqrt{2}$ (otherwise finite type). Note that $\mu_k(\Gamma_0)$ has weights $a,b,ab+1$, thus $a$ or $b< 2$; 
%assume w.l.o.g $a=\sqrt{2}$ or $a=\sqrt{3}$. 
Let us assume, without loss of generality, that $a=\sqrt{2}$. (Note then that $b=m\sqrt{2}$ where $m$ is integer). Then, by similar arguments as in Case 1 above, 
%it follows from a direct check 
it follows that $\Gamma_i$, $1\leq i \leq n-1$, (in fact $n \leq 5$), belongs to one of the following types (in the notation of Definition~\ref{def:(a,b,c)}): 
$(\sqrt{2},b,\sqrt{2}b+1)$; $(\sqrt{2},b+\sqrt{2},\sqrt{2}b+1)$; $(\sqrt{2}+b,\sqrt{2},1)$; $(\sqrt{2}+b,1,b)$ 
such that $\Gamma'_0$ can be obtained from $\Gamma_0$ possibly after enumerating the vertices and reflecting at a source or sink. 

%\noindent
%More explicitly, for any $\Gamma_i$ satisfying (**) and a vertex $k$ in $\Gamma_i$, the diagram $\mu_k(\Gamma_i)$ is either of one of these types or has an edge weight greater than $4$ or is an acyclic diagram $\Gamma'_0$; furthermore $\Gamma'_0$ can be obtained from $\Gamma_0$ by a source sink reflection.

%\noindent
\credit{Subcase 2.2} \emph{$a$ or $b$ is equal to $\sqrt{3}$.} Let us assume, without loss of generality ,that $a=\sqrt{3}$: Then, by similar arguments as in Case 1 above, it follows that $\Gamma_i$, $1\leq i \leq n-1$, (in fact $n\leq 6$), is of one of the following types: $(\sqrt{3},b,1+\sqrt{3}b)$; $(\sqrt{3},\sqrt{3}+2b,1+\sqrt{3}b)$; 
$(\sqrt{3},\sqrt{3}+2b,2+\sqrt{3}b)$; $(\sqrt{3},\sqrt{3}+b,2+\sqrt{3}b)$; $(\sqrt{3},\sqrt{3}+b,1)$; $(b,\sqrt{3}+b,1)$ such that $\Gamma'_0$ can be obtained from $\Gamma_0$ possibly after renumbering the vertices and reflecting at a source or sink.   

%\noindent
%More explicitly, for any $\Gamma_i$ satisfying (***) and a vertex $k$ in $\Gamma_i$, the mutation gives a diagram which is either of one of these types or has any edgeweight greater than $4$ or acyclic $\Gamma'_0$; furthermore $\Gamma'_0$ can be obtained from $\Gamma_0$ by a source sink reflection.

%???If $a,b\geq 4$ then $\Gamma_i$, $1<i<n$, is $a,b,1$;$a,a+b,1$;$b,a+b,1$???. Then it follows from a direct check that $\Gamma'_0$ has the same weights as $\Gamma_0$.

%\noindent
\credit{Subcase 2.3} \emph{$a,b\geq 2$.} Note that if the vertex $1$ is not incident to $e$, then $\Gamma_1=\mu_1(\Gamma_0)=(a,b,ab+1)$, so $\Gamma_1$ does not have any edge whose weight is less than four, contradicting our assumption. Thus in this case the vertex $1$ is incident to $e$. Then $\Gamma_i$, $1\leq i\leq n-1$, (in fact $n=3$), is of type $(1,a+b,b)$ or $(1,a+b,a)$ such that $\Gamma'_0$ can be obtained from $\Gamma_0$ by a reflection at a source or sink and renumbering the vertices if necessary.

%\noindent
\credit{Case 3} \emph{$\Gamma_0$ is not skew-symmetric and minimal edge weight is equal to ${2}$.} Let us write $\Gamma_0=(\sqrt{2},a,b)^-$ , where $a,b\geq \sqrt{2}$; if $\Gamma_0$ is a tree, then we take $b=0$.
%$\sqrt{2}$.}
%Let $a,b$ be the radical weights of the remaining edges, where $a,b\geq \sqrt{2}$ ; if $\Gamma_0$ is a tree then $a$ denotes the "weight" of the only remainging edge.

%???if $\Gamma_0$ is a tree, then it ($\Gamma_i$) has weights $\sqrt{2},a,\sqrt{2}a$; thus we may assume that $\Gamma_0$ is cycle (triangle).!!! (CONSIDER THIS IN THE SUBCASES)

%\noindent
\credit{Subcase 3.1} \emph{$a$ or $b$ is equal to $\sqrt{2}$.} Let us assume, without loss of generality, that $a=\sqrt{2}$. 
If $\Gamma_0=(\sqrt{2},\sqrt{2},0)^-$ is a tree, then it is easily checked (under the assumption \eqref{eq:(***)}) that $\Gamma_i=(\sqrt{2},\sqrt{2},2)$, $i=1,...n-1$, and $\Gamma'_0$ is as required in the conclusion of the uniqueness claims in the theorems. Let us now assume that $\Gamma_0$ is not a tree. (Note then that $b$ is an integer). Then $\Gamma_i$, $1\leq i \leq n-1$,  (in fact $n=4$), belongs to one of the following types: %$(\sqrt{2},a+\sqrt{2}b,b)$; $(\sqrt{2},a+\sqrt{2}b,b+\sqrt{2}a)$; $(\sqrt{2},\sqrt{2},b+{2})$. 
$(\sqrt{2},\sqrt{2}+\sqrt{2}b,b)$; $(\sqrt{2},\sqrt{2}+\sqrt{2}b,b+{2})$; $(\sqrt{2},\sqrt{2},b+{2})$ such that $\Gamma'_0$ can be obtained from $\Gamma_0$ by a reflection at a source or sink and renumbering the vertices if necessary.

%\noindent
\credit{Subcase 3.2} \emph{$a$ or $b$ is equal to $\sqrt{3}$.} Let us assume, without loss of generality, that $a=\sqrt{3}$. If $\Gamma_0$ is a tree, then it is easily checked that, for $1\leq i\leq n-1$, $\Gamma_i=(\sqrt{2},\sqrt{6},\sqrt{3})$; $(2\sqrt{2},\sqrt{6},\sqrt{3})$ and the uniqueness conlusion of the theorems is satisfied.
Let us now assume that $\Gamma_0$ is not a tree. (Note then that $b=\sqrt{2}\sqrt{3}m$ where $m$ is integer).
Then $\Gamma_i$, for $1\leq i\leq n-1$, (in fact $n\leq 6$), is of one of the following types: $(\sqrt{3},\sqrt{2}+\sqrt{3}b,b)$; $(\sqrt{3},\sqrt{2}+\sqrt{3}b,\sqrt{6}+2b)$; $(\sqrt{3},2\sqrt{2}+b\sqrt{3},\sqrt{6}+2b)$; $(\sqrt{3},2\sqrt{2}+b\sqrt{3},\sqrt{6}+b)$; $(\sqrt{3},\sqrt{2},\sqrt{6}+b)$ ; $(\sqrt{2},\sqrt{3}+\sqrt{2}b,b)$; $(\sqrt{2},\sqrt{3}+\sqrt{2}b,\sqrt{6}+b)$ such that $\Gamma'_0$ can be obtained from $\Gamma_0$ by a reflection at a source or sink and renumbering the vertices if necessary.

%\noindent
\credit{Subcase 3.3} \emph{$a,b \geq {2}$.} %(Make this last subcase)
%Assume first that the remaining edge weights $a,b\geq 4$. 
 Note that if the vertex $1$ is not incident to the edge with radical weight $\sqrt{2}$, because otherwise $\Gamma_1=\mu_1(\Gamma_0)=(\sqrt{2}+ab,a,b)$ does not have any edge whose weight is less than four, contradicting our assumption. Then, by similar arguments as in Case 1, it follows that $\Gamma_i$, $1\leq i\leq n-1$, (in fact $n=4$),
belongs to one of the following types: 
$(\sqrt{2},a,b+\sqrt{2}a)$; $(\sqrt{2},a+\sqrt{2}b,b+\sqrt{2}a)$; $(\sqrt{2},a+\sqrt{2}b,b)$ such that $\Gamma'_0$ can be obtained from $\Gamma_0$ by a reflection at a source or sink and renumbering the vertices if necessary.

%\noindent
\credit{Case 4} \emph{$\Gamma_0$ is not skew-symmetric and minimal edge weight is equal to ${3}$.}
Let us write $\Gamma_0=(\sqrt{3},a,b)^-$ , where $a,b\geq \sqrt{3}$; if $\Gamma_0$ is a tree, then we take $b=0$.
%Let $a,b$ be the weights of the remaining edges, where $a,b\geq \sqrt{3}$; if $\Gamma_0$ is a tree then $a$ denotes the "weight" of the only remainging edge.
If $\Gamma_0$ is a tree, then it is easily checked (under the assumption \eqref{eq:(***)}) that,
for $1\leq i\leq n-1$, (in fact, $n=5$), $\Gamma_i=(\sqrt{3},a,\sqrt{3}a)$ or $\Gamma_i=(\sqrt{3},2a,\sqrt{3}a)$ and the uniqueness conclusion of the theorems is satisfied.
We now assume that $\Gamma_0$ is a cycle (triangle). Suppose first that one of the radical weights $a,b$ is less than two; without loss of generality, say $a=\sqrt{3}$. Then, by similar arguments as in Case 1, it follows that, for $1\leq i\leq n-1$, (in fact, $n=6$), the diagram $\Gamma_i$ belongs to one of the following types: $(\sqrt{3},\sqrt{3}+\sqrt{3}b,b)$; $(\sqrt{3},\sqrt{3}+\sqrt{3}b,3+2b)$; $(\sqrt{3},2\sqrt{3}+\sqrt{3}b,3+2b)$; $(\sqrt{3},2\sqrt{3}+\sqrt{3}b,3+b)$; $(\sqrt{3},\sqrt{3},3+b)$ such that $\Gamma'_0$ can be obtained from $\Gamma_0$ by a reflection at a source or sink and renumbering the vertices if necessary.

Suppose now that $a,b\geq 2$. Note that the vertex $1$ is not incident to the edge with radical weight $\sqrt{3}$, because otherwise $\Gamma_1=\mu_1(\Gamma_0)=(\sqrt{3}+ab,a,b)$ does not have any edge whose weight is less than four, contradicting our assumption.
Then, by similar arguments as in Case 1, it follows that $\Gamma_i$, $1\leq i\leq n-1$, (in fact, $n=6$),
belongs to one of the following types: 
$(\sqrt{3},a,b+\sqrt{3}a)$; $(\sqrt{3},2a+\sqrt{3}b,b+\sqrt{3}a)$; $(\sqrt{3},2a+\sqrt{3}b,2b+\sqrt{3}a)$; $(\sqrt{3},a+\sqrt{3}b,2b+\sqrt{3}a)$;$(\sqrt{3},a+\sqrt{3}b,b)$ such that $\Gamma'_0$ can be obtained from $\Gamma_0$ by a reflection at a source or sink and renumbering the vertices if necessary. This completes the case.

We have completed the proof of Theorem~\ref{th:minimum} and Theorem~\ref{th:minimum2}(i). Then Theorem~\ref{th:minimum2}(ii) follows from these statements and Lemma~\ref{lem:mut-cyc-non-min}. This completes the proofs of the theorems.

\subsection{Proof of Theorem~\ref{th:gcd}}
\label{subsec:proof-gcd}
It is enough to show the theorem for $\Gamma'=\mu_k(\Gamma)$, where $k$ is a vertex in $\Gamma$. In the proof we will use the following notation: for any two vertices $i$ and $j$, we denote by $\omega_{i,j}$ (resp. $\omega'_{i,j}$) the corresponding weight in $\Gamma$ (resp. $\Gamma'$) (note that $\omega_{i,j}=\omega_{j,i}$, also if the vertices $i,j$ are not connected in $\Gamma$, then $\omega_{i,j}=0$, similarly in $\Gamma'$). We will show that, for any vertex $i$, we have  $\delta_i(\Gamma')=\delta_i(\Gamma)$. For this purpose, let us first note that $\delta_k(\Gamma')=\delta_k(\Gamma)$ by the definition of the mutation (because the weights of the edges which are incident to $k$ are not affected). Similarly for any vertex $i$ which is not adjacent to $k$, we have $\delta_i(\Gamma')=\delta_i(\Gamma)$. To complete the proof, let us now assume that $i$ is a vertex which is adjacent to $k$. Then, by the description of the mutation of diagrams in Section~\ref{sec:pre}, 
for any vertex $j$, the weight $\omega'_{i,j}$ is equal to $\omega_{i,j}$ or $(\sqrt{\omega_{i,j}}\pm\sqrt{\omega_{j,k}\omega_{k,i}})^2=\omega_{i,j}\pm2\sqrt{\omega_{i,j}\omega_{j,k}\omega_{k,i}}+ \omega_{j,k}\omega_{k,i}$. 
%or $(\sqrt{\omega_{i,j}}-\sqrt{\omega_{j,k}\omega_{k,i}})^2=\omega_{i,j}-2\sqrt{\omega_{i,j}\omega_{j,k}w_{k,i}}+\omega_{j,k}\omega_{k,i}$. 
Here note that $\sqrt{\omega_{i,j}\omega_{j,k}\omega_{k,i}}$ is an integer by the definition of a diagram, furthermore it is divisible by $\delta_i$ (because $\delta_i$ divides both $\omega_{i,j}$ and $\omega_{k,i}$). Thus $\delta_i$ divides $\omega'_{i,j}$ for any $j$, so it divides $\delta'_i=\delta_i(\Gamma')$. Since $\mu_k$ is involutive,  $\delta'_i$ divides $\omega_{i,j}$ for any $j$, so $\delta'_i$ divides $\delta_i$ as well, consequently $\delta_i=\delta'_i$. (For diagrams of skew-symmetric matrices,  the same arguments work if $\delta_i$ is defined as the greatest common divisor of the radicals of the weights of the edges which are incident to the vertex $i$).
This completes the proof.  

%Let us give an example for a use of Theorem~\ref{th:gcd}:XXX

\end{document}